\numberwithin{equation}{section}
\newtheorem{theorem}{Theorem}[section]
\newtheorem{lemma}[theorem]{Lemma}
\newtheorem{conjlemma}[theorem]{Conjectural Lemma}
\newtheorem{corollary}[theorem]{Corollary}
\newtheorem{proposition}[theorem]{Proposition}
\newtheorem{conjecture}[theorem]{Conjecture}
\theoremstyle{definition}
\newtheorem{definition}[theorem]{Definition}
\newtheorem{remark}[theorem]{Remark}
\newtheorem{example}[theorem]{Example}
\def\ov#1{\overline{#1}}
\def\tn#1{\textnormal{#1}}
\def\mf#1{\mathfrak{#1}}
\def\wt#1{\widetilde{#1}}
\def\vr{\varrho}
\def\ll{\left\langle}
\def\rr{\right\rangle}
\def\mc{\mathcal}
\def\lra{\longrightarrow}
\def\dbar{\bar\partial}
\def\bEq#1{\begin{equation}\label{#1}}
\def\eEq{\end{equation}}
\def\bsEq{\begin{equation*}}
\def\esEq{\end{equation*}}
\def\bDf#1{\begin{definition}\label{#1}}
\def\eDf{\end{definition}}
\def\bTh#1{\begin{theorem}\label{#1}}
\def\eTh{\end{theorem}}
\def\bCn#1{\begin{conjecture}\label{#1}}
\def\eCn{\end{conjecture}}
\def\bLm#1{\begin{lemma}\label{#1}}
\def\eLm{\end{lemma}}
\def\bCLm#1{\begin{conjlemma}\label{#1}}
\def\eCLm{\end{conjlemma}}
\def\bRm#1{\begin{remark}\label{#1}}
\def\eRm{\end{remark}}
\def\bEx#1{\begin{example}\label{#1}}
\def\eEx{\end{example}}
\def\bPr#1{\begin{proposition}\label{#1}}
\def\ePr{\end{proposition}}
\def\bCr#1{\begin{corollary}\label{#1}}
\def\eCr{\end{corollary}}
\def\bFg#1{\begin{figure}\label{#1}}
\def\eFg{\end{figure}}
\def\bPf{\begin{proof}}
\def\ePf{\end{proof}}
\def\bIt{\begin{itemize}[leftmargin=*]}
\def\eIt{\end{itemize}}
\def\bEn{\begin{enumerate}[label=$(\arabic*)$,leftmargin=*]}
\def\eEn{\end{enumerate}}
\def\bEnalph{\begin{enumerate}[label=$(\alph*)$,leftmargin=*]}
\def\eEnalph{\end{enumerate}}
\def\AK{\tn{AK}}
\def\coker{\tn{coker}}
\def\nd{\tn{d}}
\def\Def{\tn{Def}}
\def\ev{\tn{ev}}
\def\ker{\tn{ker}}
\def\Ob{\tn{Ob}}
\def\Symp{\tn{Symp}}
\def\cC{\mc{C}}
\def\cM{\mc{M}}
\def\cO{\mc{O}}
\def\cE{\mc{E}}
\def\cP{\mc{P}}
\def\cN{\mc{N}}
\def\cR{\mc{R}}
\def\cC{\mc{C}}
\def\cH{\mc{H}}
\def\cT{\mc{T}}
\def\E{\mathbb E}
\def\R{\mathbb R}
\def\C{\mathbb C}
\def\Z{\mathbb Z}
\def\Q{\mathbb Q}
\def\P{\mathbb P}
\def\N{\mathbb N}
\def\mfi{\mf{i}}
\def\mfj{\mf{j}}
\def\mfs{\mf{s}}
\def\la{\lambda}
\def\De{\Delta}
\def\de{\delta}
\def\om{\omega}
\def\Om{\Omega}
\def\Si{\Sigma}
\def\al{\alpha}
\def\ze{\zeta}
\begin{document}

\title{BPS invariants of \\ symplectic log Calabi-Yau fourfolds}
\author{Mohammad Farajzadeh-Tehrani}
\date{\today}
\maketitle

\begin{abstract}
Using the Fredholm setup of \cite{FT2}, we study genus zero (and higher) relative Gromov-Witten invariants with maximum tangency of symplectic log Calabi-Yau fourfolds. In particular, we give a short proof of  \cite[Cnj.~6.2]{GPS} that expresses these invariants in terms of certain integral invariants by considering generic almost complex structures to obtain a geometric count. 
We also revisit the localization calculation of the multiple-cover contributions in \cite[Prp.~6.1]{GPS} and recalculate a few terms differently to provide more details and illustrate the computation of deformation/obstruction spaces for maps that have components in a destabilizing (or rubber) component of the target.
Finally, we study a higher genus version of these invariants and explain a decomposition of genus one invariants into different contributions.
\end{abstract}

\tableofcontents

\section{Introduction}\label{intro_s}
Suppose $(X,\om)$ is a smooth closed symplectic manifold, $D\!\subset\!X$ is a smooth symplectic divisor, and $J$ is an $\om$-tame almost complex structure on $X$ preserving $TD$ (i.e., $JTD\!=\!TD$). Then, for every genus $g$ $J$-holomorphic map $u\colon\! \Si\! \lra\! X$ with a smooth domain that is not mapped into $D$ and represents the homology class $A\!\in\! H_2(X,\Z)$, there is a finite (possibly empty) set of positive integers
$$
\mfs\equiv (s_1,...,s_k) \in \Z_+^k, \qquad \sum_{i=1}^k s_i =A\cdot D\geq 0,
$$
such that $u^{-1}(D) \!=\! \{z_1,\ldots,z_k\}\!\subset\! \Si$ and $u$ has a well-defined tangency of order $s_a$ at $z_a$ with $D$ as in the holomorphic case. We usually require $z_a$ to be (ordered) marked points on $\Si$. Furthermore, we may consider extra classical marked points $z_a$ on $\Si$ and define $s_a=0$ for them. Then,  $u^{-1}(D) \!\subset \! \{z_1,\ldots,z_k\}$ and $\mfs \in \N^k$. This way, we do not need to distinguish between the classical and contact marked points as each non-negative integer~$s_a$ determines the type of the marked point~$z_a$.  \\

\noindent
Two marked $J$-holomorphic maps 
$$
\big(u,C\equiv (\Si,z_1,\ldots,z_k))\big) \quad \tn{and}\quad \big(u',C'\equiv (\Si',z_1',\ldots,z_k')\big)
$$ 
are equivalent if there exists a holomorphic identification $h\colon C\stackrel{\cong}{\lra} C'$ of the marked domains $C$ and $C'$ such that $u=u'\circ h$. Following the terminology of \cite{FT1}, we denote the space of the equivalence classes of $k$-marked genus $g$ $J$-holomorphic maps of contact type $\mfs$ with $D$ by $\cM_{g,\mfs}(X,D,A).$ 
By \cite[(1.7)]{FT2}, the real expected dimension of $\cM_{g,\mfs}(X,D,A)$ is  
\bEq{dlog_e}
2\Big(\ll c_1(TX(-\log D)),A\rr+(1-g)(\dim_\C X -3)+k\Big),
\eEq
where $TX(-\log D)$ is the logarithmic tangent bundle constructed in \cite{FMZ2}. In Section~\ref{DO_e}, we will review the construction and properties of the  logarithmic tangent bundle $TX(-\log D)$.

\begin{remark}
For any finite set $S$, with $\mfs \in (\N^S)^k$ instead of $\N^k$, the definition of the moduli space $\cM_{g,\mfs}(X,D,A)$ above and the formula  (\ref{dlog_e}) naturally extend to the case where $D=\bigcup_{i\in S} D_i$ is a simple normal crossings symplectic divisor in the sense of \cite[Dfn.~2]{FMZ1}; see \cite{FT1}. However, the classical relative compactification does not generalize easily. There are different approaches in the algebraic and symplectic categories to construct a well-behaved compact moduli space; for instance see \cite{AC,C,GS, FT1, I, P, R}.
\end{remark}

\begin{definition}
We say 
\bIt
\item $\big[u,\Si,(z_1,\ldots,z_k)\big]\in \cM_{g,\mfs}(X,D,A)$ has \textbf{maximal tangency} with $D$ if 
$$
s_1\!=\!s_A\!\equiv A\cdot D\!>\!0\qquad (\tn{and}~s_i=0\quad \forall~i>1);
$$
\item $((X,\om),D)$ is a  \textbf{symplectic log Calabi-Yau pair} if 
\bEq{logCY_e}
c_1(TX(-\log D))=c_1(TX)-\tn{PD}_X(D)=0,
\eEq
where $\tn{PD}_X(D)\!\in\!H^2(X,\Z)$ is the Poincare dual of $D$ in $X$;
\item $((X,\om),D)$ is a symplectic log Calabi-Yau (or \textbf{CY}) \textbf{fourfold} if $\dim_\R X \!=\!4$. 
\eIt
\end{definition}

\noindent
Let $\tn{Symp}_{\log}(X,D)$ denote the space of symplectic structures $\om$ on $X$ such that $D$ is a symplectic submanifold and $((X,\om),D)$ is a symplectic log CY pair. 
To drop $\om$ from the notation, we say $(X,D)$ is a symplectic log CY pair if $\tn{Symp}_{\log}(X,D)\neq \emptyset$.\\

\noindent
If $(X,D)$ is a symplectic log CY fourfold, by (\ref{logCY_e}) and the adjunction formula, $D$ will be a symplectic $2$-torus.  For the main result of this paper, we are interested in the case where $(X,D)$ is a symplectic log Calabi-Yau fourfold, $\Si$ has genus $0$ (i.e., $\Si\!=\!\P^1$) and one marked point $z_1$, and $u$ has maximal tangency with $D$ at $z_1$. In other words, we study the invariants arising from (the relative compactification of) $\cM_{0,(s_A)}(X,D,A)$. In Section~\ref{Gen_s}, we study an invariant arising from $\cM_{g,(s_A)}(X,D,A)$ with $g>0$.\\

\noindent
The moduli space $\cM_{g,(s_A)}(X,D,A)$ is often not compact. We need more conditions on $J$ along $D$ to construct a compactification; see \cite[Sec.~1]{FT1}. For $(\om,J)$ in a suitable space $\tn{AK}(X,D)$ of almost K\"ahler structures as in \cite[Thm.~1.4~or~Rmk.~1.5]{FT1}, we can use the relative compactification $\ov\cM^{\tn{rel}}_{g,(s_A)}(X,D,A)$ studied in \cite{IP1,LR,Li, FZ1} or the (analytic) log compactification $\ov\cM^{\log}_{g,(s_A)}(X,D,A)$ constructed in \cite{FT1}. 
The latter is slightly smaller than the relative compactification  in the sense that there is a surjective map 
$$
\ov\cM^{\tn{rel}}_{g,(s_A)}(X,D,A)\lra \ov\cM^{\log}_{g,(s_A)}(X,D,A);
$$ 
see \cite[Prp~4.5]{FT1} or Appendix~\ref{RC_A}. Furthermore, if $g\!=\!0$, by \cite[Thm.~1.4]{FT1}, the natural forgetful map 
$$
\ov\cM^{\log}_{0,(s_A)}(X,D,A)\lra \ov\cM_{0,1}(X,A)$$
into the underlying stable map compactification is a topological embedding\,\footnote{Here, $1$ denotes the number of marked points}. The log compactification $\ov\cM^{\log}_{g,\mfs}(X,D,A)$ in \cite{FT1} is expected to be related to the algebraic log compactification of Gross-Siebert and Abramovich-Chen \cite{GS,AC,C}. Nevertheless, \cite[Cnj.~6.2]{GPS} and this paper  concern Gromov-Witten invariants arising from the relative compactification.

\begin{remark}
In Theorem~\ref{Compactification_th} below and several other places, the particular choice of the compactification is not important. Therefore, for simplicity of the notation, when this is the case, we drop the superscript and denote the compactified moduli space by $\ov\cM_{0,(s_A)}(X,D,A)$.  
\end{remark}

\noindent
A stable $1$-marked genus zero $J$-holomorphic map $(u,(\Si,z_1))$ with a genus zero nodal domain $\Si$ is \textbf{simple}\footnote{Note that $(\Si,z_1)$ is a unstable; c.f. \cite[Dfn.~3.4]{FT2} for the definition of simple maps in general.} if the restriction $u_v$ of $u$ to each irreducible component $\Si_v$ of $\Si$ is not multiply-covered (or equally it is somewhere injective) whenever $u_v$ is not constant, and the images of two such components in $X$ are distinct; see \cite[Sec.~2.5]{MS}. An element of $\ov\cM_{0,(s_A)}(X,D,A)$ is called simple if the underlying stable map in $\ov\cM_{0,1}(X,A)$  is simple. Let 
$$
\ov\cM^{\star}_{0,(s_A)}(X,D,A)\!\subset\! \ov\cM_{0,(s_A)}(X,D,A)
$$ 
denote the subspace of simple log/relative curves. Since every log CY fourfold $(X,D)$ is semi-positive in the sense of \cite[Dfn.~4.7]{FZ1} or equally \cite[Dfn.~1.6]{FT2}, the following theorem is mainly a corollary of \cite[Thm.~1.5(2)]{FT2}, \cite[Prp.~1.7(2)]{FT2}, Lemma~\ref{LItoSI_lmm}, and some classical results. We will explain the proof with details at the end of Section~\ref{DO_e}. 

\begin{theorem}\label{Compactification_th}
Suppose $(X,D)$ is a symplectic log Calabi-Yau fourfold. Then, for every $E\!>\!0$, there exists a Baire subset $\tn{AK}^{\tn{reg}}(X,D)\!\subset \!\!\tn{AK}(X,D)$ such that the forgetful map 
$$
\tn{AK}^{\tn{reg}}(X,D) \lra \tn{Symp}_{\log}(X,D), \quad (\om,J)\lra \om,
$$
has connected fibers and,
for every $(\om,J)\!\in\! \tn{AK}^{\tn{reg}}(X,D)$ and $A\!\in\! H_2(X,\Z)$ with $\om(A)<E$,
\bIt
\item the moduli space $\cM^{\star}_{0,(s_A)}(X,D,A)$ is cut transversely and consists of finitely many points\footnote{i.e., it has no accumulation point in $\ov\cM^{\star}_{0,(s_A)}(X,D,A)$};
\item every log/relative curve in $\ov\cM^{\star}_{0,(s_A)}(X,D,A)$ has  a smooth domain, i.e.,
\bEq{LogSpace_e}
\ov\cM^{\star}_{0,(s_A)}(X,D,A)=\cM^{\star}_{0,(s_A)}(X,D,A);
\eEq
\item and,  each multiple-cover map in 
$$
\cM^{\tn{mc}}_{0,(s_A)}(X,D,A)\equiv \ov\cM_{0,(s_A)}(X,D,A)-\ov\cM^{\star}_{0,(s_A)}(X,D,A)$$
has an irreducible image in $X$. In particular, $\ov\cM_{0,(s_A)}(X,D,A)$ is a disjoint union of closed components
$$
\ov\cM_{0,(s_A)}(X,D,A)\cong \coprod_{\substack{B\in H_2(X,\Z)\\ dB=A}}\ov\cM_{0,(d)}\big(\P^1,\infty,[d \P^1]\big)^*\times \cM^\star_{0,(s_B)}(X,D,B).
$$

\eIt
 
\end{theorem}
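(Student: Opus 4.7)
The plan is to assemble the statement from the Fredholm and transversality results of \cite{FT2}, specialized to the log Calabi--Yau fourfold setting where the expected dimension vanishes and the pair $(X,D)$ is semi-positive. First, substituting $c_1(TX(-\log D))\!=\!0$, $\dim_\C X\!=\!2$, $g\!=\!0$, and $k\!=\!1$ into (\ref{dlog_e}) shows that the real expected dimension of $\cM_{0,(s_A)}(X,D,A)$ is zero. Applying \cite[Thm.~1.5(2)]{FT2} to all classes $A$ with $\om(A)\!<\!E$ simultaneously then produces a Baire subset $\tn{AK}^{\tn{reg}}(X,D)\!\subset\!\tn{AK}(X,D)$ over which every simple stratum is cut transversely, hence is a discrete set; compactness of $\ov\cM^{\star}_{0,(s_A)}$ will force finiteness once (\ref{LogSpace_e}) is in hand. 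Connectedness of the fibers of $\tn{AK}^{\tn{reg}}(X,D)\!\to\!\tn{Symp}_{\log}(X,D)$ is a soft consequence of the contractibility of the space of $\om$-tame almost complex structures preserving $TD$: a countable intersection of open dense subsets of a locally path-connected Fr\'echet manifold remains path-connected.

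Next I would address (\ref{LogSpace_e}), the absence of simple log/relative curves with a nodal domain. I would stratify $\ov\cM^{\star}_{0,(s_A)}(X,D,A)$ by combinatorial type of the domain and the distribution of any rubber/destabilizing levels. Each nontrivial stratum is cut out from a product of simple moduli spaces (one per irreducible component of the dual tree), each of expected dimension zero by the computation above applied to the sub-classes $A_v$, by matching conditions at the nodes and at the rubber levels. In real dimension four these matching conditions have strictly positive codimension, and the requirement that the entire tangency $s_A$ be concentrated at a single contact marked point further constrains the combinatorics. Applying \cite[Thm.~1.5(2)]{FT2} stratum by stratum, and using Lemma~\ref{LItoSI_lmm} to pass between the log and relative compactifications (so the argument covers both $\ov\cM^{\log}$ and $\ov\cM^{\tn{rel}}$), then yields that every such stratum is empty for $(\om,J)\!\in\!\tn{AK}^{\tn{reg}}(X,D)$.

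For the third bullet, I would invoke \cite[Prp.~1.7(2)]{FT2}: in the semi-positive setting, every multiple cover $u\!\in\!\cM^{\tn{mc}}_{0,(s_A)}(X,D,A)$ factors as $u\!=\!u_0\!\circ\!\varphi$, where $u_0$ is simple in some class $B$ with $dB\!=\!A$ for some integer $d\!\geq\!2$ and $\varphi\!\colon\!\P^1\!\to\!\P^1$ is a degree-$d$ cover; in particular $u$ has irreducible image. The maximal-tangency constraint $s_A\!=\!A\!\cdot\!D\!=\!d(B\!\cdot\!D)\!=\!d\,s_B$ concentrated at a single marked point forces $\varphi$ to be totally ramified there, so the parameter space of compatible covers is exactly $\ov\cM_{0,(d)}(\P^1,\infty,[d\P^1])^*$. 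Summing over $d\!\geq\!1$ and $B$ with $dB\!=\!A$ then produces the claimed disjoint-union product decomposition, with the $d\!=\!1$ summand recovering $\cM^{\star}_{0,(s_A)}(X,D,A)$ itself.

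The main obstacle I expect is the second step: the dimension bookkeeping for boundary strata with components in rubber levels is delicate, because the naive Fredholm setup does not see the rubber data, and one must use the refined framework of \cite{FT2} together with Lemma~\ref{LItoSI_lmm} to guarantee that every nontrivial stratum has strictly negative virtual dimension once the maximal-tangency condition is imposed. The remaining assertions --- finiteness of the simple set, the independence of the statement from the choice of compactification, and the product decomposition of the multiple-cover locus --- follow almost immediately from the cited results and standard arguments.
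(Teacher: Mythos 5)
Your overall skeleton (expected dimension zero from (\ref{dlog_e}), transversality of simple strata from \cite{FT2}, semi-positivity forcing reducible-image multiple covers into codimension $\geq 2$, hence emptiness of nodal simple strata and the product decomposition of $\cM^{\tn{mc}}_{0,(s_A)}$) matches the paper's proof. However, two of the most substantive claims are handled incorrectly or not at all.

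First, the connectedness of the fibers of $\tn{AK}^{\tn{reg}}(X,D)\to\tn{Symp}_{\log}(X,D)$ is \emph{not} ``a soft consequence'' of Baire category plus contractibility of the ambient space: a residual subset of a contractible Fr\'echet manifold can easily be disconnected (the irrationals in $\R$ are a countable intersection of open dense sets and are totally disconnected), and indeed the analogous connectivity statement \emph{fails} for symplectic Calabi--Yau sixfolds, where wall-crossing of embedded curves occurs across real codimension-one walls --- a point the paper emphasizes explicitly. The actual argument runs a parametrized moduli space over a generic path $(\om_t,J_t)_{t\in[0,1]}$ and rules out walls by observing that a critical point of the projection to $[0,1]$ would force $\dim_\R\Ob_{\log}(\ov f)=1$, which is impossible because Lemma~\ref{LItoSI_lmm} shows $\dim_\R\Ob_{\log}(\ov f)=2n$ is always even. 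Second, finiteness of $\cM^\star_{0,(s_A)}(X,D,A)$ does not follow from ``discreteness plus compactness,'' because $\ov\cM^\star_{0,(s_A)}$ is not closed in $\ov\cM_{0,(s_A)}$: a sequence of simple curves could a priori accumulate at a multiple-cover map in $\cM^{\tn{mc}}_{0,(s_A)}$. Excluding this is the super-rigidity step: for a transversely cut underlying simple log immersion one has $\ov u^*TX(-\log D)\cong\cO(1)\oplus\cO(-1)$, hence $u^*TX(-\log D)\cong\cO(d)\oplus\cO(-d)$ for the $d$-fold cover, so all nearby holomorphic deformations are tangent to the image and (as in \cite[Prp.~B.1]{W2}) nearby curves share the same image, contradicting simpleness when $d>1$. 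Relatedly, you invoke Lemma~\ref{LItoSI_lmm} only ``to pass between the log and relative compactifications,'' which is not its role; it is the automatic-transversality/super-rigidity input needed for both of the missing steps above.
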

\noindent
We will explain the meaning of the superscript $*$ on $\ov\cM_{0,(d)}\big(\P^1,\infty,[d \P^1]\big)^*$ in Remark~\ref{s_B-rmk}.\\

\noindent
In fact, we show that for every arbitrary almost K\"ahler structure $(\om,J)$, each element of  the moduli space $\cM_{0,(s_A)}(X,D,A)$  is automatically \textbf{super-rigid} in the sense that $\cM_{0,(s_A)}(X,D,A)$ does not have a sequence of maps with distinct images accumulating at a multiple-cover map; see \cite[Dfn.~2.3]{W2}. 
This statement mainly follows from \cite[Thm.~1']{HLS} that concerns the injectivity of all $\R$-linear $\dbar$-operator on a line bundle of negative degree. 
With some effort, Theorem~\ref{Compactification_th} can also be extracted from the automatic transversality results of Wendl in \cite{W1}, by viewing $X\!-\!D$ as a manifold with a cylindrical end and working with punctured surfaces. However, our logarithmic Fredholm setup has the advantage of working with closed surfaces and is more concrete for applications such as localization calculations. 
The super-rigidity statement above is similar but more straightforward than the Wendl's super-rigidity theorem \cite[Thm.~A]{W2} for symplectic Calabi-Yau sixfolds that holds for some choices of $J$. 
In particular, the connectivity of $\tn{AK}^{\tn{reg}}(X,D)$  in Thm~\ref{Compactification_th} does not hold for symplectic Calabi-Yau sixfolds, resulting in a wall-crossing phenomenon where embedded $J$-holomorphic curves bifurcate when passing through certain real codimension $1$ walls in the space of almost complex structures; see \cite{W2, BS}. 
As we explain in Section~\ref{Gen_s}, the analogy above between the genus zero integral curve counts in Calabi-Yau sixfolds and symplectic log Calabi-Yau fourfolds does not naturally extend to higher genus. \\

 \noindent
For every 1-marked curve $[u,\P^1,z_1]\in \cM_{0,(s_A)}(X,D,A)$, after a reparametrization of the domain, we may assume that the contact point with $D$ happens at $z_1\!=\!\infty\!\in\! \P^1$. If $u$ is a degree $d$ multiple-cover map, we have
\bEq{mc-decomposition}
u=\ov{u}\circ h\colon \P^1=\C\cup \{\infty\} \lra X
\eEq
such that  $h(z)$ is a degree $d$ polynomial in $z$, and $\ov{u}\colon\! \P^1\!\lra\! X$ is a degree $B$ somewhere-injective $J$-holomorphic map with 
$$
A=dB\quad \tn{and} \quad \ov{u}^{-1}(D)=\infty \in \P^1.
$$ 
 The tuple $\big(\ov{u},(\P^1,\infty)\big)$ represents an element of 
$\cM_{0,(s_B)}^\star(X,D,B)$ and  
the tuple $(h,(\P^1,\infty))$ represents an element of the open-dense relative/log moduli space $\cM_{0,(d)}\big(\P^1,\infty,[d \P^1]\big)$. 
In the limit, a sequence of degree $d$ relative holomorphic maps in $\cM_{0,(d)}\big(\P^1,\infty,[d \P^1]\big)$
may converge to a relative/log map $h'$  with nodal domain. In the relative compactification, $h'$ is a map
\bEq{Relative-lim_e}
h'\colon (\Si,z_1) \lra \P^1[\ell]\equiv  \underbrace{\P^1 {}_{\infty}\!\!\cup_{0} \P^1 \cdots {}_{\infty}\!\!\cup_{0} (\P^1,\infty)}_{\ell+1~\tn{copies of}~\P^1 }
\eEq
with image in a chain (or expanded degeneration) of rational curves such that the component containing the marked point $z_1$ is mapped to the last $\P^1$ and $z_1$ is mapped to $\infty$. Nevertheless, the image of $u'\!=\!\ov{u}\circ h'$ will remain to be the same irreducible curve $\tn{image}(\ov{u})$. 

\begin{remark}\label{s_B-rmk}
The relative and log compactifications of $\cM_{0,(d)}\big(\P^1,\infty,[d \P^1]\big)$ are different and the difference can potentially\footnote{The multiple-cover localization calculation is done using the relative compactification.} result in different Gromov-Witten invariants (i.e., different multiple-cover contributions).
The set of relative maps $(h',(\Si,z_1))$ in (\ref{Relative-lim_e}) can be topologically identified with $\ov\cM^{\tn{rel}}_{0,(d)}(\P^1,\infty,[d \P^1])$. However, if $s_B\!>\!1$, as mentioned in \cite[p.~352]{GPS}, the algebraic moduli structure or the orbifold structure somewhat differs from the standard structure  on $\ov\cM_{0,(d)}^{\tn{rel}}\big(\P^1,\infty,[d \P^1]\big)$ and depends on $s_B$. 
The reason is that the $s_B$-tangency of $\ov{u}$ with $D$ forces the tangency orders of maps to the rubber components of $\P^1[\ell]$ to all be divisible by $s_B$; c.f. Appendix~\ref{RC_A} for a quick review of the relative compactification. In \cite{GPS}, the correct moduli/orbifold structure is indicated by a superscript $*$; i.e., the moduli space is denoted by $\ov\cM^{\tn{rel}}_{0,(d)}(\P^1,\infty,[d \P^1])^*$. We will explain these technical details in Section~\ref{loc_sec}.
\end{remark}

\bTh{GW_th}
Suppose $(X,D)$ is a symplectic log Calabi-Yau fourfold. 
The relative Gromov-Witten invariants 
\bEq{G0NA_e}
N_A\equiv \# [\ov\cM^{\tn{rel}}_{0,(s_A)}(X,D,A)]^{\tn{VFC}} \in \Q\qquad \forall~A\!\in\!H_2(X,\Z),~s_A>0,
\eEq
are defined\,\footnote{i.e., they can be defined without using sophisticated virtual techniques.} and only depend on the deformation equivalence class of $\om\in \Symp_{\log}(X,D)$.
Furthermore, 
\bEq{mc-formula_e}
N_A=\sum_{\substack{B\in H_2(X,\Z)\\ dB=A, d>0}}\tn{mc}(d,s_B)~n_{B}
\eEq
such that $n_{B}\!\in\!\Z$ for all $0\neq B\!\in\!H_2(X,\Z)$ and
\bEq{mc_e}
\tn{mc}(d,s_B)=d^{-2}{d(s_B -1)-1 \choose d-1}.
\eEq
For generic $J$, $n_A$ is the count of ``logarithmically immersed" curves in $\cM^\star_{0,(s_A)}(X,D,A)$.
\eTh
\noindent
The notion of \textbf{logarithmically immersed} curve is defined in Section~\ref{DO_e}.
If $s_B=1$, note that 
$$
\tn{mc}(d,1)=d^{-2}{-1\choose d-1}=\frac{(-1)^{d-1}}{d^{2}}.
$$
\vskip.1in

\noindent
Similarly to the BPS or Gopakumar-Vafa formula for the genus zero Gromov-Witten (or GW) invariants of symplectic CY sixfolds, \cite[Cnj.~6.2]{GPS} predicts that by taking into account the contribution of multiple-cover maps we obtain an integral invariant which we call the genus zero relative/log BPS invariants of $(X,D)$. As the last statement of Theorem~\ref{Compactification_th} specifies, the proof shows that these BPS numbers can be realized as a geometric count of finitely many logarithmically-rigid curves.\\

\noindent
In \cite[Prp.~6.1]{GPS}, the coefficients $\tn{mc}(d,s_B)$ are calculated using the relative localization calculation. 
As mentioned in \cite[p.~352]{GPS}, the moduli space $\ov\cM^{\tn{rel}}_{0,(d)}(\P^1,\infty,[d \P^1])^*$ is a nonsingular Deligne-Mumford stack or complex orbifold of complex dimension $d-1$.
The contribution $\tn{mc}(d,s_B)$ of $d$-fold multiple-covers of a somewhere injective degree $B$ map $\ov{u}$ to~$N_{dB}$ is 
\bEq{LocInt_e}
\tn{mc}(d,s_B)=\int_{\ov\cM^{\tn{rel}}_{0,(d)}\big(\P^1,\infty,[d \P^1]\big)^*} c_{\tn{top}}\big(\tn{Ob}(d,s_B)\big),
\eEq
where $\tn{Ob}(d,s_B)$ is the obstruction bundle of rank $d-1$. Recall from Remark~\ref{s_B-rmk} that the relative moduli space of multiple covers of $\P^1$ and thus the obstruction bundle depends on $s_B$. In order to compute (\ref{LocInt_e}), one needs to explicitly describe $\tn{Ob}(d,s_B)$ and choose a suitable lift of the natural $S^1$-action on $\P^1$ to the (logarithmic) normal bundle of $\ov{u}$. The latter determines the lift of the action to $\tn{Ob}(d,s_B)$.
When $s_B=1$, this is done in the proof of \cite[Thm. 5.1]{BP}. For $s_B>1$, the proper choice of the lift is provided in the proof of \cite[Prp.~6.1]{GPS} and the justification is mostly left to reader. In the second half of Section~\ref{loc_sec}, we review the efficient argument of \cite{GPS} and describe an alternative approach that involves different weights\footnote{It corresponds to infinitesimally extending the action to a neighborhood of the contributing curve in $X$.} and is closer to the realm of  \cite{GV}. While the latter approach is computationally cumbersome, it illustrates how the analytic description of the deformation-obstruction spaces in this paper works for relative maps that have components in the rubber components of the target.    \\

\noindent
From the algebraic perspective,  \cite[Cnj.~6.2]{GPS} is proved for toric del Pezzo surfaces in \cite{GWZ}. There are several other  recent works such as \cite{CGKT} and \cite{GGR} that address \cite[Cnj.~6.2]{GPS} or a variation of that from the algebraic or tropical perspectives.  The (expected) relation of these invariants to certain count of $J$-holomorphic disks in $X\!-\!D$, Mirror Symmetry, loop quiver  Donaldson-Thomas invariants, and local CY threefold invariants are explained in \cite{CGKT}, \cite{BBG}, and other recent works. We do not know of any other work that approaches these invariants from a purely symplectic/analytic perspective.  \\

\noindent
In Section~\ref{Gen_s}, following \cite{B2}, we will discuss a natural higher genus version of the relative invariants (\ref{G0NA_e}) and provide an explicit decomposition of the genus one invariants into contributions of genus-zero curves and conjecturally integral ``reduced" genus one invariants.

\begin{remark}
In the algebraic geometry literature, it is shown that the genus $0$ relative GW invariants $N_A$ and their higher genus variants are explicitly related to the GW invariants of the local CY threefold $K_{X}$, where $K_X$ is the total space of the canonical bundle of $X$; see~\cite{BFGW}. The integral invariants can then be defined using the Gopakumar-Vafa formula for CY threefolds. Nevertheless, it is shown in \cite[Lmm.~12]{GWZ} that these integral invariants are related by an integral matrix with an integral inverse that has a natural interpretation in terms of Donaldson-Thomas invariants of loop quivers. Therefore, the integrality of the two definitions of BPS invariants are equivalent.
\end{remark}

\begin{example}
Let $X=\P^2$, $D$ be a cubic curve, $\ell$ denote the line class in $H_2(\P^2,\Z)$. The first row of the table below\footnote{Borrowed from \cite[Table~7.1-7.2]{G}.  Gathmann's Growi software calculates these invariants in arbitrary (computationally feasible) degree.} contains the relative GW invariants $N_d\equiv N_{d\ell}$ for $1\leq d \leq 6$ and the second row contains the BPS numbers $n_d$ in the same degree range.
\begin{center}
\begin{tabular}{ |c|c|c|c|c|c|c|} 
  \hline
 $d$ &$1$ & $2$ & $3$ & $4$ & $5$ & $6$  \\
  \hline
   $N_d$ &$9$ & $135/4$ & $244$ & $36999/16$ & $635634/25$ & $307095$ \\
  \hline
   $n_d $ &$9$ & $27$ & $234$ & $2232$ & $25380$ & $305829$ \\
   \hline
\end{tabular}
\end{center}
For instance, the number $n_2$ of maximally tangent conics can be calculated in the following way. Let $C$ be a conic in $\P^2$ that is maximally tangent to the cubic curve $D$ at a point $P\in D$. Then, 
$$
\cO_{D}(6P)= \cO(2)|_{D},
$$
where $\cO_{D}(6P)$ is the degree $6$ line bundle on the elliptic curve $D$ corresponding to the divisor $6P$ and $\cO(2)$ is the degree $2$ line bundle on $\P^2$. Thus, the set of such points $P$ is the fiber over $\cO(2)|_{D}\in \tn{Pic}^6(D)$ of the map
$$
D\cong \tn{Pic}^1(D) \lra \tn{Pic}^6(D), \qquad L \lra L^{\otimes 6},
$$
between the degree $1$ and $6$ Picard groups.
The latter is a 36:1 covering map, so there are $36$ such points in the pre-image of $\cO(2)|_{D}$. Moreover, for each of these points, the totally tangent conic is unique. However, for a similar reason, $9$ of these $36$ points correspond to double of maximally tangent lines. Therefore, $n_2=27$.
\end{example}

\begin{remark}
As the last calculation indicates, for the curves contributing to $N_A$, the tangent points with $D$ have finite order in the elliptic curve $D$. Therefore, in the holomorphic setting, it is possible to refine the invariants $N_A$ and $n_A$ using the order of the contact points in $D$, as is done in  \cite{CGKT} and \cite{B1}. Since $\tn{dim}_\C D=1$, every $J\in \AK(X,D)$ is integrable in a neighborhood of $D$ in~$X$.  Therefore, such a refinement of $N_A$ and $n_A$ also seems possible from the analytic perspective.  
\end{remark}

\noindent
I am thankful to B.~Pierrick, J.~Choi, M.~Swaminathan, and T.~Gr\"afnitz, M.~Gross,~R.~Pandeharipande,~T.~Graber,~R.~Vakil, and Sasha on MathOverFlow for answering my questions. This work is supported by the NSF grant DMS-2003340.

\section{Logarithmic Fredholm theory}\label{DO_e}
In this section, we review the Fredholm setup for the deformation-obstruction theory of relative/log maps in the virtually main stratum $\cM_{g,\mfs}(X,D,A)$ introduced in \cite{FT2}. The main output of this discussion will be the notion of logarithmic normal bundle (\ref{LogNorBundle_e}) and the super-rigidity Lemma~\ref{LItoSI_lmm}.
We conclude this section with a relatively short proof of  Theorem~\ref{Compactification_th}.\\

\noindent
Let us start with a brief discussion of the classical Fredholm setup. Suppose $\Si$ is a smooth closed  Riemann surface with the complex structure $\mfj$. Let $\tn{Map}_{A}(\Si,X)$ denote the space of all smooth maps $u\colon\!\Si\!\lra\!X$ that represent the homology class~$A$, and  
$$
\cE_A(\Si,X)\lra\tn{Map}_{A}(\Si,X)
$$
be the infinite dimensional bundle whose fiber over $u$ is $\Gamma(\Si, \Om^{0,1}_{\Si}\otimes_{\C} u^*TX)$. The Cauchy-Riemann (or \textbf{CR}) equation 
\bEq{Jnu-holo_e}
\dbar u\equiv \frac{1}{2}\big( \nd u+ J \nd u \circ \mfj)
\eEq 
can be seen as a section $\dbar\colon\tn{Map}_{A}(\Si,X)\! \lra\! \cE_A(\Si,X)$. More precisely, we consider a Sobolev completion of these spaces for the Implicit Function Theorem to apply, but, by elliptic regularity, every solution of $\dbar u \!=\!0$ will be smooth; see \cite[Appendix~B]{MS}.  The linearization of the $\dbar$-section at any $J$-holomorphic map $u$ is an $\R$-linear map
\bEq{Ddbar_e}
\tn{D}_u \dbar\colon \Gamma(\Si,u^*TX)\lra \Gamma(\Si,\Om^{0,1}_{\Si}\otimes_\C u^*TX)
\eEq
that is the sum of a $\C$-linear $\dbar$-operator and a compact operator. Therefore, it is a Fredholm operator and Riemann-Roch applies; i.e., it has finite dimensional kernel and co-kernel, and
\bEq{RR_e}
\dim_\R \Def(u)\!-\!\dim_\R \Ob(u)\!=\!2\big( \tn{deg}(u^*TX) \!+\! \dim_\C\! X (1\!-\!g)\big),
\eEq
where 
$$
\Def(u)\!=\!\ker(\tn{D}_u\dbar)\quad \tn{and}\quad   \Ob(u)\!=\!\coker(\tn{D}_u\dbar).
$$
The first space corresponds to infinitesimal deformations of $u$ (over the fixed smooth marked domain) and the second one is the obstruction space for integrating elements of $\Def(u)$ to actual deformations. 
If $\Ob(u)\!=\! 0$, by Implicit Function Theorem \cite[Thm.~A.3.3]{MS}, in a small neighborhood $U$ of $u$ in $\tn{Map}_A(\Si,X)$, the set of $J$-holomorphic maps $V=U\cap \dbar^{-1}(0)$ is a smooth manifold of real dimension (\ref{RR_e}), all the elements of $\Def(u)$ are smooth, and $T_u V\cong \Def(u)$;
see \cite[Thm.~3.1.5]{MS}. The manifold $V$ carries a natural orientation.\\

\noindent
For $(\om,J)\!\in\! \tn{AK}(X,D)$, in \cite[Sec.~4]{FT2}, corresponding to every genus $g$ $k$-marked relative map in the virtually main stratum, 
\bEq{GE_e}
f\!=\![u,C=(\Si,z_1,\ldots,z_k)]\in \cM_{g,\mfs}(X,D,A),
\eEq
we derive a \textbf{logarithmic linearization} of the CR operator/section at $u$, denoted by
\bEq{Dlog_e}
\tn{D}^{\log}_u\dbar\colon\Gamma(\Si,u^*TX(-\log D))\lra \Gamma(\Si,\Om^{0,1}_{\Si}\otimes_\C u^*TX(-\log D)),
\eEq
such that $TX(-\log D)$ is the logarithmic tangent bundle introduced in \cite{FMZ2} and constructed in detail in \cite{FMZ4}.
After the following remark, we briefly digress from the main discussion and recall the construction of $TX(-\log D)$ and some of its properties.
Then, we show how $\tn{D}^{\log}_u\dbar$ can be used to study the deformation-obstruction spaces of  $\cM_{g,\mfs}(X,D,A)$ at $f$.
 
\begin{remark}
In the symplectic category, moduli spaces of relative maps were first studied by Ionel-Parker and Li-Ruan in \cite{IP1} and \cite{LR}. In \cite{FZ1}, we give a detailed account of their approaches and streamline some aspects of their construction. The work of Ionel-Parker does not include a dedicated Fredholm setup for relative maps. Deformation spaces are defined as a subspace of the classical deformation space and the obstruction spaces are not precisely identified. The approach of Li-Ruan is based on working with manifolds with cylindrical ends as in Symplectic Field Theory. In this approach, $\tn{D}^{\log}_u\dbar$ is defined over a subspace of 
$$
\Gamma\big(\Si^*,\big(u|_{\Si^*}\big)^*T(X-D)\big), \qquad \Si^*=\Si-u^{-1}(D),
$$ 
that is completed with weighted norms on the cylindrical parts of $\Si^*$ and $X-D$. These weighted norms control the behavior of $f$ at the infinity. As the following concrete definitions and structures indicate, our logarithmic Fredholm setup has the advantage of working with closed surfaces and is more user-friendly for applications such as explicitely identifying the deformation/obstruction spaces and localization calculations. 
\end{remark}

\noindent
Assume $(X,\om)$ is a symplectic manifold and $D\subset X$ is a smooth symplectic hypersurafce. Our construction of the complex vector bundle $TX(-\log D)$ (as well as an almost complex structure $J$ compatible with $D$) depends on 
 \begin{itemize}
 \item a Hermitian structure $(\rho,\nabla,\mfi)$ on $\cN_XD$, and
 \item a symplectic identification 
\bEq{Psi_e}
\Psi\colon \cN' \stackrel{\cong}{\lra} U\subset X
\eEq
of a neighborhood $U$ of $D$ in $X$ with a neighborhood $\cN'\subset \cN_XD$ of the zero section in the normal bundle 
$$
\pi\colon \cN_XD\equiv \frac{TX|_{D}}{TD}\lra D.
$$ 
\end{itemize}
Here, $\mfi$ is a complex structure compatible with the symplectic structure on the rank $2$ real normal bundle
$$
\cN_XD\cong TD^\om=\big\{v\in T_xX \colon x\in D,~\om(v,w)=0\quad \forall~w\in T_xD\},
$$
$\rho$ is a Hermitian metric on $(\cN_XD,\mfi)$, $\nabla$ is a $(\rho,\mfi)$-Hermitian connection, and the standard symplectic structure on $\cN'$ is determined by $\om|_D$ and $(\rho,\nabla)$ as in \cite[(3.1)]{FZ1}.
The connection $\nabla$ also defines a $\C$-linear injective homomorphism
$$
h_{\nabla}\colon \pi^* TD \lra  T\cN_XD
$$
that lifts a tangent vector $w\!\in\!T_xD$ to a ``horizontal" tangent vector $h_\nabla(w)\!\in\! T_v\cN_XD$, for all $v\!\in\! \cN_XD|_{x}$. The homomorphism $h_{\nabla}$ also gives rise  to a similarly denoted isomorphism
 \bEq{Tidentification_e}
 h_{\nabla}\colon \pi^* (TD\oplus \cN_XD) \stackrel{\cong}{\lra}  T\cN_XD, \qquad (v; w\oplus v') \to (v; h_\nabla(w)+v');
 \eEq
 see \cite[(2.6)]{FMZ2}. Let $\cR$ denote the tuple $(\Psi,(\rho,\nabla,\mfi))$ used above which is called a \textbf{regularization} in \cite{FMZ1}. Via (\ref{Tidentification_e}), an almost complex structure $J_D$ on $TD$ and the complex structure $\mfi$ on $\cN_XD$ define an almost complex structure $J'$ on $T\cN_XD$. Then, the identification $\Psi$  in (\ref{Psi_e}) can be used to extend $\Psi_*\big(J'|_{\cN'}\big)$ to an $\om$-tame almost complex structure $J$ over the entire~$X$. In \cite{FMZ1,FT1}, the space of such $D$-compatible almost K\"ahler structures $(\om,J)$ on $X$ is denoted by $\tn{AK}(X,D)$. The relative moduli spaces are defined for a larger class of $D$-compatible almost complex structures introduced in \cite[Sec.~3]{IP1}; also see \cite[(2.11)]{FZ2}. \\
 
 \noindent
The logarithmic tangent bundle arising from a regularization $\cR$ is defined to be
\begin{equation*}\begin{split}
&T_{\cR}X(-\log D)=\Big(\!\!
\big(\Psi_*\pi^*TD\!\oplus\!U\!\times\!\C\big)  
\!\sqcup\!T(X\!-\!D)\!\!\Big)\!\Big/\!\!\!\sim\,\lra U\!\cup\!(X\!-\!D)\!=\!X,\\
&~(\Psi_*\pi^*TD)\!\oplus\!U\!\times\!\C
\ni\big(\Psi(v); w\big)\!\oplus\!\big(\Psi(v);c\big)
\sim\nd_v\Psi\big(h_\nabla(w)\!+\!cv)\in T_{\Psi(v)}(X\!-\!D).
\end{split}\end{equation*}
In particular, 
 \bEq{logTdec_e}
  \Psi^*T_{\cR}X(-\log D)|_{U}= \pi^*TD|_{\cN'} \oplus \cN' \times \C.
 \eEq
Furthermore, the map $\iota_{\cR} \colon T_{\cR}X(-\log D)\lra TX$ defined by
\bEq{iota_e}
\iota_\cR(\ze)=\begin{cases}
\nd_v\Psi\big(h_\nabla(w)\!+\!cv),&\hbox{if}~
\ze\!=\!\big[(\Psi(v);w)\!\oplus\!(\Psi(v);c)\big],~v\!\in\!\cN';\\
\ze,&\hbox{if}~\ze\!\in\!T(X\!-\!V);
\end{cases}
\eEq
is a complex linear homomorphism whose restriction to $X\!-\!D$ is a bundle isomorphism. \\

\noindent
In \cite{FMZ1} and \cite{FMZ3}, we introduced topological notions of normal crossings symplectic divisor (and variety). We proved that, at the cost of deforming $\om$, every normal crossings symplectic divisor $D\subset X$ admits a compatible system $\cR$ of local linearizations generalizing the auxiliary data $\big(\Psi,(\rho,\nabla,\mfi)\big)$ used above in the smooth case; see \cite[Thm.~2.13]{FMZ1} and \cite[Thm.~3.4]{FMZ3}. 
We call such a compatible system $\cR$ of local linearizations a \textbf{regularization}.  In \cite{FMZ4}, we used a regularization $\cR$ to construct $T_{\cR}X(-\log D)$ and prove the following and a few other results for an arbitrary normal crossings symplectic divisor.

\begin{theorem}(\cite[Thm.~1.2]{FMZ4})\label{TLog_thm}
Suppose $(X,\om)$ is a symplectic manifold  and $D\!\subset\!X$ is a normal crossings symplectic divisor.  
\begin{enumerate}[label=(\arabic*),leftmargin=*]

\item\label{loddfn_it} 
An $\om$-regularization~$\cR$ for $D\!\subset\!X$ determines a vector bundle~$T_{\cR}X(-\log D)$ 
over~$X$ with a smooth vector bundle homomorphism 
\bEq{Ninc_e}
\iota_{\cR}\!: T_{\cR}X(-\log D)\lra TX.
\eEq
\item\label{logJ_it}
An $\cR$-compatible almost complex structure~$J$ on~$X$ determines a complex structure~$\mfi_{\cR,J}$ 
on the vector bundle $T_{\cR}X(-\log D)$ so that the bundle homomorphism~(\ref{Ninc_e}) is $\C$-linear.
\item\label{loginv_it}
The deformation equivalence class~$TX(-\log D)$ of~$T_{\cR}X(-\log D)$ 
depends only on the deformation equivalence class of $((X,\om),V)$.
\item\label{logsplit_it} If $D'\!\subset\!X$ is a smooth submanifold so that 
$D\!\cup\!D'\!\subset\!X$ is also an NC symplectic divisor and 
$D\!\cap\!D'$ contains no open subspace of~$D$, then
\bEq{logsplit_e} 
TX(-\log(D\!\cup\!D'))\!\oplus\! \cO_X(D')
\cong TX(-\log D)\!\oplus\!(X\!\times\!\C).
\eEq
\item\label{cTXV_it} We have 
\bEq{cTXV_e} 
c\big(TX(-\!\log D)\big)=
\frac{c(TX)}{1\!+\!\tn{PD}_X([D^{(1)}]_X)\!+\!\tn{PD}_X([D^{(2)}]_X)
\!+\!\ldots}\in H^*(X;\Q),
\eEq
where $D^{(k)}$ is the $k$-fold intersection locus of $D$.
The above equality holds in $H^*(X;\Z)$ if $D\!\subset\!X$ is a simple normal crossings divisor.
\end{enumerate}
\end{theorem}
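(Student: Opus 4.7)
The plan is to verify the theorem statement part by part, with the main work concentrated in the bundle construction in item \ref{loddfn_it} and the deformation invariance in item \ref{loginv_it}; the remaining parts fall out from standard exact sequences once the framework is set up.

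For item \ref{loddfn_it}, I would first check that the prescribed gluing defines a smooth real vector bundle of rank $\dim_\R X$. The only nontrivial point is on the overlap $U\!-\!D$, where $(\Psi(v); w)\oplus(\Psi(v); c)$ with $v\!\neq\!0$ is identified with $\nd_v\Psi(h_\nabla(w)+cv)\in T_{\Psi(v)}(X\!-\!D)$. The key observation is that for $v\!\neq\!0$ the map
\[
\pi^*TD|_v\oplus\C\lra T_v\cN_XD, \qquad (w,c)\longmapsto h_\nabla(w)+cv,
\]
is a linear isomorphism, since $c\mapsto cv$ spans the radial normal direction fiberwise while $h_\nabla(w)$ covers the horizontal. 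This makes the transition smooth and linear and produces $T_{\cR}X(-\log D)$; the formula (\ref{iota_e}) for $\iota_\cR$, extended by the identity on $T(X\!-\!D)$, is then smooth by inspection and fails to be an isomorphism only along $D$, where the second summand collapses because $c\cdot 0=0$.

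For item \ref{logJ_it}, the $\cR$-compatibility of $J$ says precisely that under $\Psi$ the pulled back $J$ equals the product complex structure induced by $J|_{TD}$ on $\pi^*TD$ and by $\mfi$ on the normal fiber. Placing that product structure on the decomposition (\ref{logTdec_e}) over $U$ and using $J$ itself on $T(X\!-\!D)$ gives a globally defined $\mfi_{\cR,J}$ for which $\iota_\cR$ is $\C$-linear. For item \ref{loginv_it}, the plan is to show that the space of $\om$-regularizations of $D$ is path connected (and contractible in a suitable sense), invoking the convexity-type constructions underlying \cite[Thm.~2.13]{FMZ1} and \cite[Thm.~3.4]{FMZ3}. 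A path $\cR_t$ between two regularizations $\cR_0,\cR_1$ yields, by applying the construction of item \ref{loddfn_it} fiberwise, a logarithmic tangent bundle on $X\!\times\![0,1]$ whose endpoint restrictions recover $T_{\cR_i}X(-\log D)$, giving the required deformation equivalence. The same mechanism handles a deformation of the pair $((X,\om),D)$ itself, after upgrading the path of regularizations to one over the total family.

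For items \ref{logsplit_it}--\ref{cTXV_it}, I would extract everything from a residue-type short exact sequence. In the smooth irreducible case one has
\[
0\lra T_\cR X(-\log D)\stackrel{\iota_\cR}{\lra} TX\lra \cN_XD\lra 0
\]
by taking cokernels fiberwise along $D$ (off $D$, $\iota_\cR$ is already an isomorphism). Multiplicativity of total Chern classes together with the identity $c_1(\cN_XD)=\tn{PD}_X([D]_X)$ give (\ref{cTXV_e}) in the smooth case. The SNC formula follows by induction on the number of components, with the inductive step provided by the splitting identity (\ref{logsplit_e}), which in turn rests on an analogous residue sequence relating $TX(-\log(D\cup D'))$ and $TX(-\log D)$, together with the adjunction identification $\cO_X(D')|_{D'}\cong \cN_XD'$; rearranging the resulting rank-one pieces yields the stated splitting. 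The $\prod(1\!+\![D_i])=1\!+\![D^{(1)}]\!+\![D^{(2)}]\!+\!\cdots$ expansion is then formal inclusion-exclusion.

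The main obstacle is the deformation invariance in item \ref{loginv_it}: the bundle itself is very explicit, but the regularization data are auxiliary, and one must carry out a careful parametric argument (including a precise definition of deformation equivalence of NC symplectic pairs) to guarantee that interpolating two choices produces a smooth family of bundles with the expected restrictions. Once that is in place, the splitting and Chern class identities are essentially formal consequences of the short exact sequences above, while items \ref{loddfn_it} and \ref{logJ_it} reduce to the concrete linear algebra indicated above.
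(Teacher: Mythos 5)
First, a point of comparison: this paper does not prove Theorem~\ref{TLog_thm} at all --- it is quoted from \cite[Thm.~1.2]{FMZ4}, and Section~\ref{DO_e} only reviews the construction of $T_{\cR}X(-\log D)$ and of $\iota_{\cR}$ in the case of a \emph{smooth} divisor. So there is no in-paper proof to match your proposal against; what follows is an assessment of the proposal on its own terms.

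Your sketch is reasonable for smooth $D$, but two steps would fail as written. (i) The theorem concerns arbitrary normal crossings divisors, and your construction in item~\ref{loddfn_it} --- the observation that $(w,c)\mapsto h_\nabla(w)+cv$ is an isomorphism for $v\neq 0$ --- only builds the bundle near the smooth locus of $D$. Near the depth-$k$ strata $D^{(k)}$ with $k\geq 2$ one must glue using a compatible \emph{system} of local regularizations (the content of \cite[Thm.~2.13]{FMZ1} and \cite[Thm.~3.4]{FMZ3}), appending $k$ trivial $\C$-summands simultaneously and checking consistency of overlaps between charts of different depths; your induction in items~\ref{logsplit_it}--\ref{cTXV_it} presupposes that the bundle already exists for the full NC divisor, so the construction is not actually reduced to the smooth case. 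This is also where the $\Q$ versus $\Z$ dichotomy in (\ref{cTXV_e}) originates: the identity $\prod_i(1+\tn{PD}_X([D_i]))=1+\tn{PD}_X([D^{(1)}]_X)+\cdots$ requires a global labelling of branches, i.e., simplicity, and the non-simple case is not addressed. (ii) The sequence $0\to T_\cR X(-\log D)\stackrel{\iota_\cR}{\to} TX\to \cN_XD\to 0$ is not a short exact sequence of vector bundles over $X$: by (\ref{iota_e}), $\iota_\cR$ has a one-dimensional complex kernel along $D$ (the $\C$-summand is sent to $c\cdot 0=0$ there) and $\cN_XD$ is only defined over $D$, so Whitney multiplicativity cannot be applied to it in the smooth category --- that device belongs to the coherent-sheaf world. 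The correct smooth-category substitute is precisely the stable isomorphism (\ref{logsplit_e}) (for $D=\eset$ it reads $TX(-\log D')\oplus\cO_X(D')\cong TX\oplus(X\times\C)$), which must be produced by an explicit clutching or homotopy argument rather than extracted from a cokernel computation; deriving (\ref{logsplit_e}) itself from ``an analogous residue sequence'' is circular. By contrast, your outline for item~\ref{loginv_it} (connectedness of the space of regularizations plus a parametric version of the construction over $X\times[0,1]$) is the right idea and is consistent with the uniqueness-up-to-deformation statements of \cite{FMZ1,FMZ3}.
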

\vskip.1in

\noindent
In what follows, in the light of Theorem~\ref{TLog_thm}.\ref{loginv_it}, we drop $\cR$ from the notation and simply denote $T_{\cR}X(-\log D)$ by $TX(-\log D)$ and  $\iota_{\cR}$ in (\ref{iota_e}) by $\iota$.\\

\noindent
Going back to the main discussion, the logarithmic linearization map (\ref{Dlog_e}) constructed in \cite{FT1} is a natural lift of (\ref{Ddbar_e}) that makes the following diagram commute:
\bEq{Ddbarnu_e}
\xymatrix{
& \Gamma(\Si,u^*TX(-\log D)) \ar[rrr]^{\tn{D}^{\tn{log}}_u \dbar} \ar[d]^{\iota}&&& \Gamma(\Si,\Om^{0,1}_{\Si}\otimes_\C u^*TX(-\log D))\ar[d]^{\iota}\\
& \Gamma(\Si,u^*TX)  \ar[rrr]^{\tn{D}_u\dbar} &&& \Gamma(\Si,\Om^{0,1}_{\Si}\otimes_\C u^*TX)\;.
}
\eEq

\noindent
With $\tn{D}_u^{\log} \dbar$ as in (\ref{Dlog_e}), similarly to the classical case, if $\tn{coker}(\tn{D}^{\log}_u\dbar)\!=\! 0$, by Riemann-Roch, the set of relative $J$-holomorphic maps of any fixed contact type $\mfs$ (over the fixed marked domain $C$) close to $f$ in (\ref{GE_e})) form an oriented smooth manifold of real dimension
$$
2\big( \tn{deg} (u^*TX(-\log D))\!+\! \dim_\C\! X (1\!-\!g)\big).
$$
Considering the deformations of the marked domain $C$ of the relative map $f$, we get the dimension formula (\ref{dlog_e})
and the deformation-obstruction long exact sequence 
\bEq{equ:long-def}
0\lra \tn{aut}(C) \stackrel{\de}{\lra} 
\Def_{\log}(u)\lra \Def_{\log}(f) \lra \Def(C) \stackrel{\de}{\lra} 
\Ob_{\log}(u)  \lra  \Ob_{\log}(f) \lra 0,
\eEq
such that
$$
\Def_{\log}(u)\!=\!\ker(\tn{D}^{\log}_u\dbar), \qquad \Ob_{\log}(u)\!=\!\coker(\tn{D}^{\log}_u\dbar),
$$
and $ \tn{aut}(C)$ is the Lie algebra of the automorphism group of $C$.
If $\tn{Ob}_{\log}(f)\!=\!0$, then a small neighborhood of $f$ in $\cM_{g,\mfs}(X,D,A)$ is a smooth orbifold of the expected dimension (\ref{dlog_e}).\\

\noindent
The long-exact sequence (\ref{equ:long-def}) is the long-exact sequence associated to a short-exact sequence of fine sheaves
$$
0\lra \cO(T\Si(-\log z))\lra \cO(u^*TX(-\log D))\lra \cO(\wt{\cN}_f)\lra 0
$$ 
over $\Si$, defined in the following way. \\

\noindent
In \cite[(4.18)]{FT2}, we show that the standard complex linear derivative map $\nd u\colon T\Si\!\lra\!TX$ gives rise to a \textbf{logarithmic derivative} map 
$$
\nd^{\log} u\colon T\Si(-\log \mf{z})\lra u^*TX(-\log D), \quad \tn{with}~\mf{z}=u^{-1}(D)\subset \Si,
$$ 
such that the following diagram commutes:
 \bEq{dulog_e1}
\xymatrix{
& T\Si(-\log \mf{z}) \ar[rr]^{\nd^{\log} u}\ar[d]^{\iota} && u^*TX(-\log D)\ar[d]^{\iota}\\
& T\Si \ar[rr]^{\nd u} && u^*TX\;.
}
\eEq
Here, the vertical maps $\iota$ are the natural $\C$-linear homomorphisms in (\ref{iota_e}) from the log tangent bundles $ T\Si(-\log \mf{z})$ and   $TX(-\log D)$ into the classical tangent bundles $T\Si$ and $TX$, respectively; also see \cite[Thm.~1.2]{FMZ2}.
By composing $\nd^{\log} u$ with the homomorphism $ T\Si(-\log z)\lra T\Si(-\log \mf{z})$, we can also define 
$$
\nd^{\log} u\colon T\Si(-\log z)\lra u^*TX(-\log D),
$$ 
where 
$$
z\equiv z_1+\cdots+z_k
$$ 
is the full divisor corresponding to all marked points. However, in what follows, we  will have $z=\mf{z}$; therefore, the two are the same.\\
\noindent 

\noindent
Briefly, the log derivative map $\nd^{\log} u$ is defined in the following way. Away from the contact points $\mf{z}=u^{-1}(D)\subset \{z_1,\ldots,z_k\}$, by the identification 
 $$
 TX(-\log D)|_{X-D}\stackrel{\iota}{\cong} TX|_{X-D},
 $$
 we have $\nd^{\log} u\cong  \nd u $.  For each marked point $z_a\!\in\!\mf{z}$, there are 
 \bIt
 \item a local coordinate $x$ on a sufficiently small neighborhood $\De\ni z_a$ of $z_a\in \Si$ (i.e., $z_a$ is $(x=0)$), 
 \item  and, a local chart $U_a$ around $u(z_a)\in D\subset X$, 
 \eIt
 such that 
 \bEn
 \item $U_a$ is identified with a neighborhood of $u(z_a)$ in the complex normal bundle $\pi\colon (\cN_XD,\mfi) \lra D$ as in (\ref{Psi_e});
 \item $T \cN_XD$ admits a decomposition $T \cN_XD\cong \pi^*TD \oplus \pi^* \cN_XD$ as in (\ref{Tidentification_e});
 \item with respect to the identifications in (1) and (2), $J|_{TU_a}$ coincides with $\pi^*\big( J_D\oplus \mfi)|_{U_a}$;
 \item the $J$-holomorphic map $u$ decomposes into horizontal and vertical components as 
 $$
 u(x)\!=\!(\ov{u}_a(x); \ze_a(x)) \in \cN_XD
 $$ 
meaning that $\ov{u}_a\colon \De\lra D $ is a $J_D$-holomorphic map into $D$ and $\ze_a$ is a section of $\ov{u}_a^* \cN_XD$;
 \item and, the section $\ze_a$ is holomorphic with respect to a $\C$-linear $\dbar$-operator $\ov{u}_a^*\tn{D}^{\cN}\dbar$ on $\ov{u}_a^*\cN_XD$ induced by $J$ along $D$.
 \eEn 
 Then, 
 \bIt 
 \item the holomorphic section $\ze_a$ decomposes as $\ze_a(x)=x^{s_a}\eta_a(x)$ with $\eta_a(0)\neq 0$,
 \item and, via the decomposition (\ref{logTdec_e}),  $\nd^{\log} u|_{\De}$ has an equation of the form
 $$
\nd^{\log} u= \nd \ov{u}_a\oplus \big(s_{a} \frac{\nd x}{x}+ \tn{holomorphic terms} \big),
 $$
mapping the local generating section $x\partial x$ of $T\Si(-\log \mf{z})|_{\De}$ to 
$$
\partial \ov{u}_a(x\partial x)\oplus \big(s_{a} + O(x)\big) \in T_{u(z_a)}D \oplus \C \cong TX(-\log D)|_{u(z_a)}
$$
\eIt
Note that at $x=0$, we have $(s_{a} + O(x))|_{x=0}=s_a$ and 
 \bEq{Logdu_e}
\nd^{\log}_{z_a} u (x\partial x)= 0 \oplus s_a\in T_{u(z_a)}D \oplus \C.
\eEq

\bDf{log-immersion_dfn}
We say $f\in \cM_{g,\mfs}(X,D,A)$ is a \textbf{log immersion} if  $u$ is an immersion away from $\mf{z}$ and $\mf{z}=z$ (i.e., all of the marked points are contact points with $D$).
\eDf

\noindent
By (\ref{Logdu_e}) and similarly to the classical case \cite[p. 284-285]{ST}, if $f=[u,\Si,\vec{z}=(z_1,\ldots,z_k)]$ is a log immersion, then $\nd^{\log} u$ is an embedding of vector bundles, the quotient 
\bEq{LogNorBundle_e}
\cN_f\equiv u^*TX(-\log D)/\big(\nd^{\log}_u~T\Si(-\log z)\big)
\eEq
is a complex vector bundle, and $\tn{D}^{\log}_u\dbar$ descends to an $\R$-linear Fredholm operator $\tn{D}^{\log}_{\cN_f}\dbar$ on smooth sections of $\cN_f$ such that 
\bEq{Ob_f}
\Def_{\log}(f)=\tn{ker}(\tn{D}^{\log}_{\cN_f}\dbar) \quad \tn{and}\quad \Ob_{\log}(f)=\tn{coker}(\tn{D}^{\log}_{\cN_f}\dbar).
\eEq
Here, the key point is that, by (\ref{Logdu_e}), $\nd^{\log} u$ is automatically injective at every contact point with $D$, whereas $\nd_{z_a} u$ could be zero if $s_a>1$. We call $\cN_f$ the \textbf{logarithmic normal bundle} of $f$. If only the relative marked points are concerned, one can define the logarithmic normal bundle of $u$ to be 
$$
u^*TX(-\log D)/\big(\nd^{\log}_u~T\Si(-\log \mf{z})\big).
$$
In our desired application, $\mf{z}=z$ and thus the two are the same.\\

\noindent
As usual, the Fredholm operator $\tn{D}^{\log}_{\cN_f}\dbar$ decomposes as 
\bEq{D=dbar+R_e}
\tn{D}^{\log}_{\cN_f}\dbar = \dbar_{\cN_f} + R
\eEq
such that $ \dbar_{\cN_f}$ is a $\C$-linear $\dbar$-operator defining a holomorphic structure on the log normal bundle $\cN_f$ and $R$ is an anti $\C$-linear zero-order operator depending on the Nijenhueis tensor of $J$. If $\nd^{\log} u$ is not an embedding or $\mf{z}\neq z$, we still obtain a short exact sequence of sheaves of $\cO_\Si$-modules
$$
0\lra \cO(T\Si(-\log z))\stackrel{\nd^{\log}u}{\lra} \cO(u^*TX(-\log D))\lra \cO(\wt\cN_f)\lra 0
$$
such that 
$$
\wt\cN_f= \cO(\cN_f)\oplus \cN^{\tn{tor}}_f
$$ 
is the direct sum of the sheaf of the holomorphic sections of an $(\dim_\C X-1)$-dimensional holomorphic vector bundle $\cN_f$ and a skyscraper sheaf $\cN^{\tn{tor}}_f$. Furthermore, $\tn{D}^{\log}_u\dbar$ descends to a Fredholm operator $\tn{D}^{\log}_{\cN_f}\dbar$ on sections of $\cN_f$ such that 
\bEq{Ob_f2}
\Def_{\log}(f)=\tn{ker}(\tn{D}^{\log}_{\cN_f}\dbar)\oplus H^0(\cN^{\tn{tor}}_f) \quad \tn{and}\quad \Ob_{\log}(f)=\tn{coker}(\tn{D}^{\log}_{\cN_f}\dbar);
\eEq
see \cite[Sec.~1.4]{Sh} or \cite{ST} for a general account of the discussion above.

\bLm{LItoSI_lmm}
Suppose $(X,D)$ is a symplectic log Calabi-Yau fourfold, $(\om,J)\in \AK(X,D)$, and 
$$
f\!=\![u,\P^1,z_1]\!\in\! \cM_{0,(s_A)}(X,D,A).
$$
Then, $\cN_f\cong \cO_{\P^1}(-(1+n))$, 
\bEq{Ob-n_e}
\Def_{\log}(f)=H^0(\cN^{\tn{tor}}_f)\quad\tn{and}\quad \dim_\R \Ob_{\log}(f)= 2n 
\eEq
where 
$$
n=\dim_\C H^0(\cN^{\tn{tor}}_f) \geq 0.
$$
In particular, 
$$
\cN_f\cong \cO_{\P^1}(-1)\quad \Leftrightarrow\quad \Ob_{\log}(f)=0 \quad\Leftrightarrow \quad  f \tn{ is a log immersion},
$$
and any of these implies that $u$ is somewhere injective and $\Def_{\log}(f)\!=\!0$.
\eLm

\begin{proof}
First, by (\ref{dlog_e}), we have 
$$
\dim \Def_{\log}(f) - \dim \Ob_{\log}(f)=
2\Big(0+(1-0)(2 -3)+1\Big)=0
$$
By Riemann-Roch, 
$$
\dim_\R \tn{ker}(\tn{D}^{\log}_{\cN_f}\dbar)- \dim_\R \tn{coker}(\tn{D}^{\log}_{\cN_f}\dbar)=
2\big( \tn{deg}(\cN_f) \!+1\big).
$$
From (\ref{Ob_f2}), we conclude that $\cN_f\cong \cO_{\P^1}(-(1+n))$.
Since $-(1+n)<0$, by \cite[Thm.~1']{HLS} , for any choice of a compact operator $R$ in (\ref{D=dbar+R_e}), we have  $\tn{ker}~\tn{D}^{\log}_{\cN_f}\dbar=0$, which gives us (\ref{Ob-n_e}).
The conditions $\cN_f\cong \cO_{\P^1}(-1)$, $\Ob_{\log}(f)=0$, and $f$ being a log immersion are all equivalent to $n\!=\!0$.
Every holomorphic map $h\colon \P^1\to \P^1$ of degree $2$ or higher has at least two branch points. Therefore, since $f$ has only one contact marked point $z_1$, if $f$ is a log immersion then $u$ must be somewhere injective and an immersion away from $z_1$; i.e., $n=0$. \end{proof}

\begin{remark}
In the context of Lemma~\ref{LItoSI_lmm}, if $u$ is somewhere injective with cusp points 
$$
w_1,\ldots,w_k\in u^{-1}(X-D)
$$ 
of orders $b_1,\ldots,b_k$ (see \cite[Sec~1.5]{Sh}), then $\cN^{\tn{tor}}_f$ is the direct sum sky-scraper sheaf $\oplus_{i=1}^k \C_{w_i}^{b_i}$; in particular, $n=\sum_{i=1}^k b_i$. The marked point $z_1=\infty$ can also be a cusp point of $u$, however, since the second term on the righthand side of (\ref{Logdu_e}) is non-zero at $z_1$, from the logarithmic perspective, $d_{z_1}^{\log} u$ is non-zero and $z_1$ behaves like a smooth point. 
\end{remark}

\begin{remark}
The discussion above only concerns maps in the virtually main stratum $\cM_{g,\mfs}(X,D,A)$ of the compactified moduli space $\ov\cM_{g,\mfs}(X,D,A)$. Together with some ad-hoc dimension counting, this is sufficient for proving Theorem~\ref{Compactification_th}. As noted in \cite{GV}, in order describe the deformation-obstruction spaces along maps in other strata of $\ov\cM_{g,\mfs}^{\tn{rel}}(X,D,A)$, one must systematically replace $TX$ by $TX (-\log D)$. We illustrate how this should be done in the analysis of the virtual normal bundle to the fixed loci components in the localization calculations of Section~\ref{loc_sec}.
\end{remark}

\newtheorem*{proofof-Compactification_th}{Proof of Theorem~\ref{Compactification_th}}
\begin{proofof-Compactification_th}

In \cite[Prp.~1.7 and Crl.~1.9]{FT2}, we prove that if a pair $(X,D)$ is positive in the sense of \cite[Dfn.~4.7]{FZ1} or equally \cite[Dfn.~1.6]{FT2}, and $E>0$ is an arbitrary large number, then there exists a Baire subset  
$$
\tn{AK}^{\tn{reg}}(X,D)\!\in\!\tn{AK}(X,D)
$$ 
of the second category such that for every $(\om,J)$ in this set and $A\in H_2(X,\Z)$ with $\om(A)\leq E$,
\bEn
\item the moduli space $\cM^{\star}_{0,(s_A)}(X,D,A)$ is cut transversely and is a smooth manifold of real dimension 
\bEq{LD_e}
2\big(\ll c_1(TX(-\log D)),A\rr+(\dim_\C X-3)+1\big);
\eEq
\item the image of  $\ov\cM^{\log}_{0,(s_A)}(X,D,A)-\cM^{\star}_{0,(s_A)}(X,D,A)$ under the evaluation map
\bEq{Lev_e}
\ev\colon\!\cM^\star_{0,(s_A)}(X,D,A)\!\lra D
\eEq
lies in the image of smooth maps from finitely many smooth even-dimensional manifolds of at least $2$ real dimension less than (\ref{LD_e}),
\item and consequently, the map (\ref{Lev_e}) defines a pseudo-cycle of real dimension (\ref{LD_e}) in $D$.
\eEn
The same holds for the relative compactification $\ov\cM^{\tn{rel}}_{0,(s_A)}(X,D,A)$ instead of $\ov\cM^{\log}_{0,(s_A)}(X,D,A)$; see \cite{IP1, FZ1, FZ2}. Similarly to the proof of the classical results \cite[Thm.~6.6.1]{MS} and \cite[Prp.~3.21]{RT}, the proof of (1)-(3) above is by showing that for generic $J$, 
\bIt
\item each stratum of the simple part $\ov\cM^{\star}_{0,(s_A)}(X,D,A)$ is cut transversely, 
\item and, under the positivity condition, non-simple or multiple-cover maps (more precisely, their image under $\tn{ev}$) happen in real codimension $2$ or higher.
\eIt
\noindent
The proof of (2) involves replacing a non-simple map $f$ with an underlying simple map $f'$ with multi-nodes (i.e., points at which more than one component intersect). This is done by (i) collapsing the ghost bubbles
(ii) replacing each multiple-cover bubble component by its image curve (or the underlying simple map), and (iii) collapsing each sub-tree of the bubbles whose components have the same image.\\

\noindent 
Under the weaker semi-positivity assumption, instead, the codimension of the collapsed stratum is a complicated formula that depends on the number of multi-nodes and a few other factors; see \cite[(3.42)]{RT} and \cite[(4.79)]{FT2}. In particular, the real codimension will be a positive even number if 
$f'$ has more than one irreducible component.  Therefore, if $(X,D)$ is semi-positive, as is the case in Theorem~\ref{Compactification_th}, the same argument as in the positive case shows that 
\bEn
\item each stratum of $\ov\cM^{\star}_{0,(s_A)}(X,D,A)$ is cut transversely, 
\item and, non-simple or multiple-cover maps $f$ for which $f'$ has at least two components  happen in real codimension $2$ or higher.
\eEn
Since, by assumption, the expected dimension (\ref{LD_e}) is zero in Theorem~\ref{Compactification_th}, we conclude that for generic $J$
\bIt
\item the moduli space $\cM^{\star}_{0,(s_A)}(X,D,A)$ is cut transversely and is a discrete set of points;
\item every stratum in $\ov\cM^{\star}_{0,(s_A)}(X,D,A)-\cM^{\star}_{0,(s_A)}(X,D,A)$ has negative dimension (because we get $2$ real codimension for each node), thus is empty;
\item and, the only strata of multi-cover maps in 
$$
\cM^{\tn{mc}}_{0,(s_A)}(X,D,A)\equiv \ov\cM_{0,(s_A)}(X,D,A)-\ov\cM^{\star}_{0,(s_A)}(X,D,A)
$$
with non-negative dimension are those which are multiple covers of a single somewhere injective curve; i.e.,
$$
\cM^{\tn{mc}}_{0,(s_A)}(X,D,A)\cong \coprod_{\substack{B\in H_2(X,\Z)\\ dB=A, d>1}}\ov\cM_{0,(d)}\big(\P^1,\infty,[d \P^1]\big)^*\times \cM^\star_{0,(s_B)}(X,D,B)
$$
\eIt
Here, $\ov\cM_{0,(d)}\big(\P^1,\infty,[d \P^1]\big)^*$ are the relative moduli spaces in (\ref{s_B-rmk}).\\

\noindent
In order to finish the proof of Theorem~\ref{Compactification_th}, it remains to show that 
\bIt
\item $\cM^{\star}_{0,(s_A)}(X,D,A)$ consists of finitely many points with no accumulation point in $\cM^{\tn{mc}}_{0,(s_A)}(X,D,A)$,
\item and, the subset of such regular almost K\"ahler structures $\tn{AK}^{\tn{reg}}(X,D)\!\subset \!\!\tn{AK}(X,D)$ is connected in each deformation equivalence class of $\om\in \Symp_{\log}(X,D)$.
\eIt
\end{proofof-Compactification_th}

\noindent
The first statement follows from the super-rigidity/automatic transversailty Lemma~\ref{LItoSI_lmm} in the following way. 
Suppose $(X,D)$ is a symplectic log Calabi-Yau fourfold,  
$$
f=[u,\Si,z_1] \in \ov\cM_{0,(d)}\big(\P^1,\infty,[d \P^1]\big)^*\times \cM^\star_{0,(s_B)}(X,D,B)\subset \cM^{\tn{mc}}_{0,(s_A)}(X,D,A)
$$
and $\{f_i\}_{i\in \N}$ is a sequence in $\cM^\star_{0,(s_A)}(X,D,A)$ that converges to $f$. 
Let 
$$
\ov{f}\equiv [\ov{u},\P^1,\infty]\in \cM^\star_{0,(s_B)}(X,D,B)
$$ 
denote the simple map underlying $f$. By Lemma~\ref{LItoSI_lmm} and since $\ov{f}$ is cut transversely for all $B$ with $\om(B)<\om(A)$, we have 
$$
\ov{u}^* TX(-\log D)\cong  T\P^1(-\log \infty) \oplus \cN_{\ov{f}}\cong \cO(1)\oplus \cO(-1).
$$  
Therefore,  
$$
u^* TX(-\log D)\cong  \cO(d)\oplus \cO(-d),
$$
and the holomorphic sections of $ \cO(d)$ correspond to vector fields tangent to the image of $u$. 
Consequently, as in \cite[Prp.~B.1]{W2},  there is $i_0$ such that for all $i>i_0$, $f_i$ has the same image as $f$. That contradicts the simpleness of $f_i$ if $d\!>\!1$.  This finishes the proof of the first bullet above. \\

\noindent
Suppose $(\om_1,J_1)$ and $(\om_2,J_2)$ are two regular almost K\"ahler structures on $(X,D)$ such that $\om_1$ and $\om_2$ are deformation equivalent in $\tn{Symp}_{\log}(X,D)$. By considering the space of all paths $(\om_t,J_t)_{t\in[0,1]} $ connecting $(\om_1,J_1)$ and $(\om_2,J_2)$ and the family moduli space 
$$
\ov\cM_{0,(s_A)}\big(X,D,A; \{J_t\}_{t\in [0,1]}\big)\lra [0,1],
$$
the same proof as above shows that for a generic path $(\om_t,J_t)_{t\in[0,1]} $

\bEn
\item the moduli space $\cM^{\star}_{0,(s_A)}\big(X,D,A; \{J_t\}_{t\in [0,1]}\big)$ is an (oriented) smooth $1$-manifold;
\item every stratum in 
$$ 
\ov\cM^{\star}_{0,(s_A)}\big(X,D,A; \{J_t\}_{t\in [0,1]}\big)-\cM^{\star}_{0,(s_A)}\big(X,D,A; \{J_t\}_{t\in [0,1]}\big)
$$ 
has negative dimension (because we get $2$ real codimension for each node) and therefore is empty;
\item and, the only strata of multi-cover maps in 
$
\cM^{\tn{mc}}_{0,(s_A)}\big(X,D,A;\{J_t\}_{t\in [0,1]}\big)
$
with non-negative dimension are those which are multiple covers of a single somewhere injective curve.
Suppose 
$$
f=[u,\Si,z_1] \in \ov\cM_{0,(d)}\big(\P^1,\infty,[d \P^1]\big)^*\times \cM^\star_{0,(s_B)}(X,D,B; J_t)\subset \cM^{\tn{mc}}_{0,(s_A)}(X,D,A; J_t), 
$$
for some $t\!\in\! [0,1]$ and $\{f_i\}_{i\in \N}$ is a sequence in $\cM^\star_{0,(s_A)}\big(X,D,A; \{J_t\}_{t\in [0,1]}\big)$ that converges to~$f$. We may assume that the underlying simple curve $\ov{f}=[\ov{u},\P^1,\infty]\in \cM^\star_{0,(s_B)}(X,D,B; J_t)$ is not a regular point of the projection
\bEq{tProj_e}
\cM_{0,(s_B)}\big(X,D,B; \{J_t\}_{t\in [0,1]}\big)\lra [0,1],
\eEq
otherwise, the same reasoning as above shows that the only other curves in 
$$
\ov\cM_{0,(s_A)}(X,D,A; \{J_t\}_{t\in [0,1]})
$$ 
near $f$ are of the form $\ov{u'}\circ h'$ where $[\ov{u'},\P^1,\infty]\in \cM^\star_{0,(s_B)}(X,D,B;\{J_t\})$ is a deformation of $[\ov{u},\P^1,\infty]$ and $h'$ defines an element of $ \ov\cM_{0,(d)}\big(\P^1,\infty,[d \P^1]\big)^*$. Furthermore, for a generic path $(w_t,J_t)_{t\in [0,1]}$, every critical point of the projection map (\ref{tProj_e}) satisfies 
$$
\dim_\R \Ob_{\log}(\ov{f}) =1.
$$
By the second identity in (\ref{Ob-n_e}), the latter is impossible. This finishes the proof of (2) and thus Theorem~\ref{Compactification_th}.\qed
\eEn

\section{Relative localization and the proof of Theorem~\ref{GW_th}}\label{loc_sec}

In this section, we first derive Theorem~\ref{GW_th} from Theorem~\ref{Compactification_th} and then explain the localization calculation of (\ref{mc_e}).

\newtheorem*{proofof-GW_th}{Proof of Theorem~\ref{GW_th}}
\begin{proofof-GW_th} 
By Theorem~\ref{Compactification_th}, for generic choice of $J$,  the moduli space $\ov\cM_{0,(s_A)}(X,D,A)$ decomposes into a disjoint union of closed components
\bEq{Decomp_e}
\ov\cM_{0,(s_A)}(X,D,A)=\cM^\star_{0,(s_A)}(X,D,A)\cup \cM^{\tn{mc}}_{0,(s_A)}(X,D,A)
\eEq
where $\cM^\star_{0,(s_A)}(X,D,A)$ is a finite set of (oriented) points and $\cM^{\tn{nc}}_{0,(s_A)}(X,D,A)$ is a disjoint union of positive-dimensional closed oriented orbifolds. Furthermore, for two deformation equivalent regular almost complex structures $J_1$ and $J_2$, there is a path of almost complex structures $\{J_t\}_{t\in [0,1]}$ such that  $\ov\cM_{0,(s_A)}(X,D,A;\{J_t\}_{t\in [0,1]} )$ similarly decomposes into a disjoint union of closed components
$$
\ov\cM_{0,(s_A)}\big(X,D,A;\{J_t\}_{t\in [0,1]}\big)\cong \coprod_{\substack{B\in H_2(X,\Z)\\ dB=A}}\ov\cM_{0,(d)}\big(\P^1,\infty,[d \P^1]\big)^*\times \cM^\star_{0,(s_B)}\big(X,D,B;\{J_t\}_{t\in [0,1]}\big).
$$
In conclusion, the relative GW invariant 
$$
n_A = \#\cM^\star_{0,(s_A)}(X,D,A)\in \Z
$$
is well-defined and counts the number of logarithmically immersed degree $A$ rational curves in $X$ with maximal tangency at a single point with $D$.\\

\noindent
Since a generic path $\{J_t\}_{t\in [0,1]}$ preserves the decomposition (\ref{Decomp_e}), we may artificially define 
$$
\Q\ni N_A= \sum_{\substack{B\in H_2(X,\Z)\\ dB=A, d>0}}\tn{mc}(d,s_B)~n_{B}\qquad \forall~A\!\in\!H_2(X,\Z)
$$
as in \cite[Thm.~1.5]{BS}.
However, by considering the Euler class of the obstruction bundle over each closed component of $\ov\cM_{0,(s_A)}(X,D,A)$, we can equip $\ov\cM^{\tn{rel}}_{0,(s_A)}(X,D,A)$  with a natural virtual fundamental (or \textbf{VFC}) class such that 
$$
N_A=\int_{[\ov\cM^{\tn{rel}}_{0,(s_A)}(X,D,A)]^{\tn{VFC}}} 1.
$$ 
This is a simple case of a similar construction/definition of \textbf{VFC} in \cite[Thm.~1.2.(1)]{Z3}. \\

\noindent
It just remains to explain the calculation of the multiple-cover contributions (\ref{mc_e}) which will occupy the rest of this section.
\qed
\end{proofof-GW_th}

\begin{remark}
To be consistent with the setup in the items (1)-(3) including (\ref{Lev_e}), we may as well define 
$$
n_A = \tn{deg}\big(\tn{ev}\big(\cM^\star_{0,(s_A)}(X,D,A)\big)\in H_0(D,\Z)\cong \Z.
$$
\end{remark}
\vskip.2in 
\noindent
Consider the $S^1$-action 
$$
[x_0,x_1]\lra  [x_0,t x_1]\qquad \forall~t\in S^1\subset \C^*
$$
on $\P^1$ corresponding to weights $(\al_0,\al_1)=(0,-1)$ in \cite[Sec.~27.1]{Mir}. Let $y_0\!=\!x_1/x_0$ and $y_1=x_0/x_1$ be the affine coordinates around 
$$
p_0\equiv [1,0]\quad \tn{and}\quad p_1\equiv [0,1]\in \P^1.
$$ 
The weights of the action on $T_{p_0}\P^1$ (generated by $\partial y_0$) and $T_{p_1} \P^1$ (generated by $\partial y_1$)  are $1$ and $-1$, respectively. Different lifts of the given action to $\cO(1)\lra\P^1$ are classified by the action-weights $(m+1,m)$ on $\big(\cO(1)|_{p_0},\cO(1)|_{p_1}\big)$.
Since $T\P^1(-\log p_1)|_{p_0}$ and   $T\P^1(-\log p_1)|_{p_1}$ are generated by $\partial y_0$ and $y_1\partial y_1$, the line bundle 
$$
T\P^1(-\log p_1)\cong \cO(1)
$$ has weights $(1, 0)$ at $(p_0,p_1)$.\\

\noindent
Let 
\bEq{GC_e}
h\colon \P^1\lra \P^1,\qquad h(y_0)=y_0^d,
\eEq
be the Galois $d$-covering of $\P^1$ fully ramified at $p_0$ and $p_1$. By \cite[Exe.~27.2.3]{Mir}, for $m$ as above, the weights of the lifted action on 
$$
H^1(\P^1,h^*\cO(-1))= H^0(\P^1, \om_{\P^1}\otimes h^* O(1))^\vee
$$
are 
$$
\aligned
&-\Big(\frac{j}{d}+ m\Big),\qquad 1\leq j \leq d-1,  
\endaligned
$$
where the $-$ sign comes from the Serre duality\footnote{The statement of \cite[Ex.~27.2.6]{Mir} is missing the sign factor arising from the Serre duality.}. Therefore, the product of the weights of the $S^1$-action on $H^1(\P^1,h^*\cO(-1))$ is 
\bEq{O(-1)LC_e}
\frac{(-1)^{d-1}}{d^{d-1}} \prod_{j=1}^{d-1} \big(j+md\big).
\eEq
This number will determine the contribution of the obstruction bundle below with $O(-1)$ being the logarithmic normal bundle of a log immersed curve $\ov{u}$. \\

\noindent
The $S^1$-action on $\P^1$ naturally lifts to an $S^1$-action on $\ov\cM^{\tn{rel}}_{0,(d)}\big(\P^1,p_1=\infty,[d \P^1]\big)^*$. 
The fixed point loci of this action come in families of various dimension and are characterized by (unordered) partitions 
$$
\cP \colon \big(d=d_1+\cdots+d_k\big), \quad d_i>0,
$$   
of $d$ in the following way. 
\bIt
\item If $k=1$, the fixed point locus is a single point and corresponds to the degree $d$ Galois cover  in (\ref{GC_e}) of the fixed base simple curve $\ov{f}=[\ov{u},\P^1,p_1]\in \cM^\star_{0,(s_B)}(X,D,B)$.
\item If $k\geq 2$, the fixed locus $\ov\cM_\cP$ corresponding to $\cP$ is the compactification of the open dense subset $\cM_\cP\cong \cM_{0,k+1}/\sim$ described below. Figure~\ref{fixedloci_fig} illustrates the domain and image of an element in $\cM_{\cP}$ with $k=3$.
\eIt
The curves in $\cM_u$ consist of $k$ (unordered) Galois covers
$$
h_i\colon (\P^1,\infty) \lra (\P^1,p_1), \qquad i=1,\ldots,k,
$$ 
of degrees $d_1,\ldots,d_k$ of $\ov{f}$ connected to a genus $0$ $(k+1)$-marked curve that is mapped $(ds_B):1$ to the fiber 
$$
\P^1\cong F_q \subset \P_XD=\P(\cN_XD\oplus \C)
$$ 
over $q= \ov{u}(p_1)\in D$; see Appendix~\ref{RC_A} for a brief description of the relative moduli space and the notations used here. 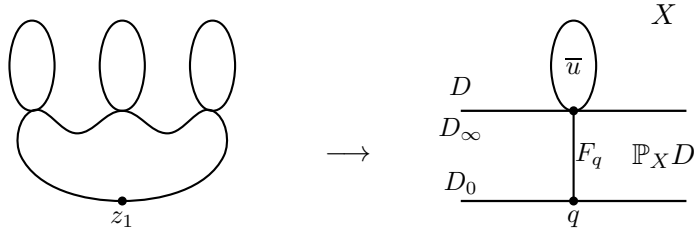
\begin{figure}
\begin{pspicture}(38,-1)(11,2)
\psset{unit=.3cm}
\psline[linewidth=.1](70,-2)(80,-2)
\psline[linewidth=.1](70,2)(80,2)

\rput(79,6.3){{$X$}}
\rput(79,0){$\P_XD$}

\rput(70,3.1){\small{$D$}}
\rput(70,1.2){\small{$D_{\infty}$}}
\rput(70,-1.2){\small{$D_{0}$}}
\pscircle*(75,-2){.2}\rput(75,-2.8){\small{$q$}}
\psccurve(74,4)(75,2)(76,4)(75,6)\rput(75,4){\small{$\ov{u}$}}
\pscircle*(75,2){.2}
\psline(75,2)(75,-2)\rput(75.7,0){\small{$F_q$}}

\psccurve(50,4)(51,2)(52,4)(51,6)
\psccurve(54,4)(55,2)(56,4)(55,6)
\psccurve(58,4)(59,2)(60,4)(59,6)
\psccurve(51,2)(53,1)(55,2)(57,1)(59,2)(59.5,0)(55,-2)(50.5,0)
\pscircle*(55,-2){.2}\rput(55,-2.8){\small{$z_1$}}
\rput(65,0){$\lra$}

\end{pspicture}
\caption{A relative map with $k\!=\!3$ into the expanded degeneration $X[1]= X \cup \P_XD$.}
\label{fixedloci_fig}
\end{figure}
In a suitable coordinate $z$ (so that the intersection with the divisor $D_0\subset \P_XD$ happens at $z=\infty$), the map to $F_q$ is given by the polynomial 
\bEq{ds_B}
\vr(z)=\Big(\prod_{i=1}^k (z-z_i)^{d_i})\Big)^{s_B}\colon \P^1\to F_q\cong \P^1
\eEq
and is fully ramified at the infinity. Since the nodes of $f$ are unordered,  the notation $\cM_{0,k+1}/\!\!\sim$ means $\cM_{0,k+1}$ divided by a permutation group of marked points $z_i$ and $z_j$ with $d_i=d_j$. The identification $\cM_\cP\cong \cM_{0,k+1}/\!\!\sim$ is given by 
$$
\vr(z) \Leftrightarrow  [\P^1,z_1,\ldots,z_k,\infty]\in \cM_{0,k+1}/\!\!\sim.
$$

\begin{remark}
We may alternatively put an order on the nodal points $z_1,\ldots,z_k$ and divide (\ref{Contmu_e}) by $k!$.
\end{remark}

\begin{remark}
If $s_B\!=\!1$, then $\ov\cM_{\cP}=\ov\cM_{0,(d)}^{\tn{rel}}\big(\P^1,p_1,[d \P^1]\big)$.
If $s_B\!>\!1$, due to the extra factor of $s_B$ in (\ref{ds_B}), the algebraic moduli structure or the orbifold structure of the moduli space of $d$-covers of $\ov{f}$ slightly differs from the standard structure  on $\ov\cM_{0,(d)}^{\tn{rel}}\big(\P^1,p_1,[d \P^1]\big)$. Following \cite{GPS}, the correct moduli structure is indicated by a superscript $*$.
\end{remark}

\begin{remark}
The log compactification constructed in \cite{FT1} is smaller than the relative compactification. In fact, in the log compactification $\ov\cM_{0,(d)}^{\tn{log}}\big(\P^1,p_1,[d \P^1]\big)^*$, the fixed loci of the $S^1$-action are still indexed by $\cP$, the open dense part $\cM_\cP$ of $\ov\cM_\cP^{\log}$  is the same as above, and $\ov\cM_\cP^{\log} \cong \ov\cM_{0,k+1}/\!\!\sim$. In the relative compactification, however, $\ov\cM_\cP$ is a complicated blowup of $\ov\cM_{0,k+1}/\!\!\sim$ away from $\cM_{0,k+1}/\!\!\sim$.
\end{remark}

\noindent
In the following, we first explain the localization contribution of the isolated point (\ref{GC_e}) when $k=1$, and then describe the contributions of other strata corresponding to $k>1$.\\

\noindent
For 
$$
f=[u=\ov{u}\circ h, C=(\P^1,\infty)]\in \cM_{0,(s_A)}(X,D,A),
$$
if $\ov{u}$ is a log immersion, we have 
$$
\ov{u}^*TX(-\log D)= T\P^1(-\log p_1) \oplus \cN_{\ov{f}} \cong \cO(1)\oplus \cO(-1)
$$
and
the Deformation-Obstruction long exact sequence
$$
0\lra \tn{aut}(C) \lra
\Def_{\log}(u)\lra \Def_{\log}(f) \lra \Def(C) \lra
\Ob_{\log}(u)  \lra  \Ob_{\log}(f) \lra 0,
$$ 
in  (\ref{equ:long-def}) reads 
$$
\aligned
&0\lra H^0\big(\P^1, T\P^1(-\log \infty)\big)\lra H^0\big(\P^1, h^*T\P^1(-\log p_1)\big)\lra \Def_{\log}(f) \lra 0,\\
&0  \lra H^1\big(\P^1, h^*\cN_{\ov{f}}\big) \lra  \Ob_{\log}(f) \lra 0.
\endaligned
$$
Since the line bundle  $T\P^1(-\log p_1)\cong \cO(1)$ has weights $(1, 0)$ at $(p_0,p_1)$, by \cite[Exe.~27.2.3]{Mir}, the localization contribution of $\Def_{\log}(f)$ is 
$$
\frac{d! }{d^{d-1}}.
$$
Therefore, by (\ref{O(-1)LC_e}) and considering the discrete automorphism factor $1/d$ of $h$ in (\ref{GC_e}), the contribution of the $d$-fold Galois cover to $N_A$ is 
\bEq{LCm_e}
\frac{(-1)^{d-1}}{d^2}\frac{ \prod_{j=1}^{d-1} \big(j+md\big)}{(d-1)!}=\frac{(-1)^{d-1}}{d^2}{d(m+1)-1 \choose d-1}=\frac{1}{d^2}{-md-1 \choose d-1}.
\eEq

\noindent
In the proof of \cite[Prp.~6.1]{GPS}, the authors state that the right value of $m$ to consider for the other contributions to vanish is 
$$
m=1-s_B,
$$ 
which yields the coefficient
\bEq{mcdssB_e}
\tn{mc}(d,s_B)= \frac{(-1)^{d-1}}{d^2}{d(2-s_B)-1 \choose d-1}=\frac{1}{d^2}{d(s_B-1)-1 \choose d-1}.
\eEq
The justification (for the vanishing of other terms) is left to the reader and the claim is stated to follow from a similar argument as in the proof of \cite[Thm. 5.1]{BP} and examining the obstruction space in \cite{GV}.

\begin{remark} The moduli space of concern is (more or less) a simple case $(r=1, \alpha_1=d, b_1=(d))$ of \cite[Dfn.~3.1]{BP}. When $s_B=1$, the rank $d-1$ obstruction bundle 
$$
\tn{Ob}(d,1)\lra  \ov\cM^{\tn{rel}}_{0,(d)}\big(\P^1,\infty,[d \P^1]\big)
$$
is $\cR^1\pi_* f^*\mathcal{O}(-1)$; see the proof of \cite[Lmm.~5.9]{B2}. We could not find any reference in which the obstruction bundle is explicitely identified as above when $s_B>1$. The proof of \cite[Thm. 5.1]{BP} also concerns the case $s_B=1$ and the chosen weights on the logarithmic normal bundle $\mathcal{O}(-1)$ are $(-1,0)$. Items (i)-(iv) in \cite[p.~387]{BP}  explain the reasons for the vanishing of the other strata in the fixed locus (i.e., $\ov\cM_\cP$ with $k>1$). 
\end{remark}

\begin{remark}
In \cite{GPS}, the base curve $\ov{u}\colon \P^1\lra X$ is (implicitly) assumed to be an immersion so that its (regular) normal bundle is defined and is the line bundle $\cO(s_B-2)\lra \P^1$. In the light of Lemma~\ref{LItoSI_lmm}, the map $\ov{u}$ can have a cusp point at the intersection point with $D$ and still be a smooth point of the moduli space $\cM_{0,(s_B)}(X,D,A)$. The local contribution should still be the same as (\ref{mcdssB_e}) because the cusp point can be smoothed out without affecting the transversality.
\end{remark}

\begin{remark}\label{GVsteup_rmk}
In the setup of Graber-Vakil's paper \cite{GV}, they require an $S^1$-action on the entire target $X$ that preserves $D$ (and the induced action on $\cN_XD$ has non-zero weights). In the approach of \cite{GPS} reviewed above, we only have an action on the domain $\P^1$ of $\ov{u}$. Then, we need to identify the obstruction bundle $\tn{Ob}(d,s_B)$ and lift the given $S^1$-action on the base to the fibers of $\tn{Ob}(d,s_B)$. This can be done in many ways and different choices correspond to different values of $m$ above. The considered weight on $\cN_XD|_{q}$ in \cite{GPS} is zero. Nevertheless, even though we don't have a global $S^1$-action on $X$, a neighborhood of $\ov{u}$ can be identified with a neighborhood of $\P^1$ in the normal bundle $\cO(s_B-2)$ and the action can be (infinitesimally) lifted to such a neighborhood. With respect to local coordinates $(y_1,c_1)$ on the total space of $\cO(s_B-2)$ at $p_1$, the intersection of $D$ with such a local neighborhood is given by the local model equation
$$
(c_1-y_1^{s_B}=0)\subset \C^2.
$$ 
In order for this equation (in other words, the intersection of $D$ with this neighborhood) to be invariant under the $S^1$-action, the weight of the action on $c_1$ must be $-s_B$. That means the weight of the action on $\cO(s_B-2)|_{p_1}$ must be $-s_B$ which corresponds to $m=1$ and 
\bEq{m=1GC_e}
\frac{(-1)^{d-1}}{d^2}{d(m+1)-1 \choose d-1}=\frac{(-1)^{d-1}}{d^2}{2d-1 \choose d-1}
\eEq
in (\ref{LCm_e}).
\end{remark}

\noindent
In the rest of this section, using the trick in Remark~\ref{GVsteup_rmk} and the resulting weight $-s_B$ on the normal bundle $\cN_XD$, we  explain the localization contributions of the fixed curves in $\cM_\cP$ with $k>1$. 

\begin{remark}
In \cite{GV}, the authors assume that the $S^1$-action fixes $D$ to obtain the relative virtual localization formula \cite[Thm.~3.6]{GV}. The local action described in Remark~\ref{GVsteup_rmk} only preserves $D$. 
Nevertheless, if $s_B\!>\!1$, the long-exact sequence in \cite[p.~14]{GV} splits into easy to understand terms from which we obtain an explicit relative virtual localization formula.
\end{remark}

\noindent
Since the case $s_B$ is well-studied, assume $s_B>1$. For 
$$
f=\big(u\colon C=(\Si,z_1)\lra (X[1],D_0)\big) \in \cM_\cP
$$ 
illustrated in Figure~\ref{fixedloci_fig}, let 
$$
f_1=\big(u_1\colon C_{1}=(\Si_1,(z_{1i})_{i=1}^k)\lra (X,D)\big)=\bigcup_{i=1}^k\big(u_{1i}=\ov{u}\circ h_{1i}\colon C_{1i}=(\P^1_{1i},\infty)\lra (X,D)\big)
$$ 
denote the union of components corresponding to the $k$ Galois covers and 
$$
f_2=\Big(u_2\colon C_2=\big(\P^1_2,(z_{2i})_{i=1}^k\cup z_1\big)\lra (\P_XD,D_\infty\cup D_0)\Big)
$$ 
denote the rubber component in $\P_XD$ given by (\ref{ds_B}). As noted in \cite{GV}, the analysis of the virtual normal bundle to such a component is identical to the case of ordinary stable maps with the bundle $TX$ systematically replaced by $TX (-\log D)$.
In fact, the contribution of $\ov\cM_\cP$ to $N_{0,dB}$ is of the form
\bEq{Contmu_e}
\tn{Cont}(\cM_\cP)=\frac{\prod_{i=1}^k (d_is_B)}{|\tn{Aut}(\cP)| \prod_{i=1}^k d_i}\frac{e(\tn{Ob}_{\log}(f)^{\tn{mov}})e(\tn{aut}(C)^{\tn{mov}})}{e(\tn{Def}_{\log}(f)^{\tn{mov}}) e(\tn{Def}(C)^{\tn{mov}})}
\eEq
where 
\bIt
\item $\tn{Def}_{\log}(f)$ and $\tn{Ob}_{\log}(f)$ are calculated via the long-exact sequence
\bEq{DOloc_e}
\aligned
0 &\lra \tn{Def}_{\log}(f)
 \lra H^0(\Si_1,u_1^*TX(-\log D))\oplus  \wt{H}^0(\P^1_2,u_2^*
T\P_XD(-\log D_\infty \cup D_0))
\stackrel{\tn{Res}}{\lra} \bigoplus_{i=1}^k T_{q}D\\
&\lra \tn{Ob}_{\log}(f) \lra H^1(\Si_1,u_1^*TX(-\log D))\oplus  H^1(\P^1_2,u_2^*
T\P_XD(-\log D_\infty \cup D_0))\lra 0
\endaligned
\eEq
as in \cite[p.~14]{GV};
\item $\tn{Aut}(\cP)$ is the symmetry group of $\cP$;
\item $\prod (d_is_B)$ is the product of the tangency orders at the nodes because each $f$ arises as a limit of that many distinct curves in the main stratum,
\item and, the product $\prod d_i$ in the denominator corresponds to the discrete automorphisms of $h_i$.
\eIt
Moreover, in (\ref{DOloc_e})
\bIt
\item $q=u_2(z_{2i})=u_1(z_{1i})$, for all $i=1,\ldots,k$, is the image of the nodes between $f_1$ and $f_2$ in $D$; 
\item   and, $\wt{H}^0(\P^1_2,u_2^*T\P_XD(-\log D_\infty \cup D_0))$ is the quotient of  ${H}^0(\P^1_2,u_2^*
T\P_XD(-\log D_\infty \cup D_0))$ by the complex 1-dimensional subspace corresponding to the $\C^*$-action on $\P_XD$.
\eIt 
Since
$$
u_2^* T\P_XD(-\log D_\infty \cup D_0)= \big(\P^1\times T_qD\big) \oplus \big(\P^1\times \C\big),
$$
we have 
$$
\wt{H}^0(\P^1_2,u_2^* T\P_XD(-\log D_\infty \cup D_0))\cong T_qD \quad\tn{and}\quad {
H}^1(\P^1_2,u_2^*T\P_XD(-\log D_\infty \cup D_0))=0.
$$
Since $s_B>1$, for all $i=1,\ldots,k$, the residue map
$$
\tn{Res}\colon H^0(\P^1_{1i},u_{1i}^*TX(-\log D))\lra  T_{q}D
$$
is zero. Also
 $$
 \wt{H}^0(\P^1_2,u_2^*
T\P_XD(-\log D_\infty \cup D_0))\cong T_qD
\stackrel{\tn{Res}}{\lra} \bigoplus_{i=1}^k T_{q}D
$$
is the diagonal embedding.
Therefore,
$$
\tn{Def}_{\log}(f)\cong H^0(\Si_1,u_1^*TX(-\log D))
$$
and 
$$
0\lra \frac{\bigoplus_{i=1}^k T_{q}D}{T_qD}\lra \tn{Ob}_{\log}(f) \lra  H^1(\Si_1,u_1^*TX(-\log D))\lra 0.
$$
Note that,
$$
H^1(\Si_1,u_1^*TX(-\log D))= \bigoplus_{i=1}^k H^1(\P^1_{1i},h_{1i}^*\cO(-1)).
$$
\noindent
Decomposing $\tn{Def}(C)$ into the moving and fixed factors, $\tn{Def}(C)=\tn{Def}(C)^{\tn{fix}}\oplus \tn{Def}(C)^{\tn{mov}}$, the moving part corresponds to the simultaneous smoothing of the nodes and has the equivariant contribution $-s_B -\psi$. Here, $-s_B$ is the weight of the action on $\cN_XD|_{q}$ and $\psi$ is the relative $\psi$-class in \cite[Sec.~2.5]{GV}. The fixed part $\tn{Def}(C)^{\tn{fix}} $ corresponds to the deformations of $\cM_{\cP}$. We conclude that the localization contribution $\tn{Cont}(\ov\cM_\cP)$ of $\cP$ is given by the formula
\bEq{Mmu-cont_e}
\aligned
\tn{Cont}(\ov\cM_\cP)&=\frac{s_B^{k}}{|\tn{Aut}(\cP)|} \Big(\prod_{i=1}^k d_i \tn{Cont}(f_{1i})\Big)(-1)^{k-1} \int_{\ov\cM_{\cP}} \frac{1}{-s_B-\psi}\\
&= \frac{s_B}{|\tn{Aut}(\cP)|} \Big(\prod_{i=1}^k d_i \tn{Cont}(f_{1i})\Big) \int_{\ov\cM_\cP} \psi^{k-2}\\
&=\frac{s_B}{|\tn{Aut}(\cP)|}  \prod_{i=1}^k \frac{(-1)^{d_i-1}}{d_i}{2d_i-1 \choose d_i-1} \int_{\ov\cM_\cP} \psi^{k-2}
\endaligned
\eEq
where $\tn{Cont}(f_{1i})$ is the contribution of the Galois cover $f_{1i}$ with $m=1$ in (\ref{m=1GC_e}) and $(-1)^{k-1}$ is the contribution of $\frac{\bigoplus_{i=1}^k T_{q}D}{T_qD}$ to $e\big(\tn{Ob}_{\log}(f)^{\tn{mov}}\big)$.  \\

\noindent
For instance, if $d=2$ and $\cP=\{1,1\}$, then $\ov\cM_{\cP}$ is a point, $\tn{Aut}(\cP)$ has order $2$, and 
$$
\tn{Cont}(\cM_\cP)=\frac{s_B}{2}.
$$
Together with the contribution of the Galois cover (i.e. $\cP=(2)$), which is $\frac{(-1)^{2-1}}{2^2}{2\times 2-1 \choose 2-1}=-3/4$, we
 get
 $$
\tn{mc}(2,s_B)= \frac{-3}{4}+\frac{2s_B}{4}=\frac{1}{2^2}{ 2(s_B-1)-1\choose 2-1}.
 $$

\noindent
If $d=3$, the partitions of $3$ are $\cP_1=(3), \cP_2=(2,1)$, and $\cP_3=(1,1,1)$.
Since $\ov\cM_{\cP_2}$ is a point, by (\ref{m=1GC_e}) and (\ref{Mmu-cont_e}), we have 
$$
\tn{Cont}(\cM_{\cP_1})=\frac{10}{9}, \quad \tn{Cont}(\cM_{\cP_2})=\frac{-3s_B}{2},
$$
Also, by \cite[Rmk.~3.4]{GV}, we have 
$$
\int_{\ov\cM_{\mu_2}} \psi= \frac{1}{3!}\int_{\ov\cM_{0,4}} 3s_B\psi_1=s_B/2;
$$
therefore, 
$$
\tn{Cont}(\cM_{\cP_3})=s_B^2/2.
$$
We conclude that 
$$
\tn{mc}(3,s_B)=\frac{10}{9}-\frac{3s_B}{2}+\frac{s_B^2}{2}=\frac{1}{3^2}{ 3(s_B-1)-1\choose 3-1}.
$$
\begin{remark}
The examples above suggest that the degree $k$ term of (\ref{mcdssB_e}), as a polynomial in $s_B$, is the sum of all $\tn{Cont}(\cM_{\cP})$ where $\cP$ is a partition of $d$ into $k+1$ summands.
\end{remark}

\section{Higher genus invariants with maximal tangency}\label{Gen_s}

Suppose $(X,D)$ is a symplectic log Calabi-Yau fourfold  and $g\geq 1$. By (\ref{dlog_e}), the real expected dimension of $\ov\cM_{g,(s_A)}(X,D,A)$ is $2g$. The relative GW invariants 
\bEq{NgA_e}
N_{g,A}= \int_{\big[\ov\cM^{\tn{rel}}_{g,(s_A)}(X,D,A)\big]^{\tn{VFC}}}(-1)^g \la_g \in \Q\qquad \forall~A\!\in\!H_2(X,\Z),~s_A>0,
\eEq
are defined by pairing the (algebraic) virtual fundamental class of $\ov\cM^{\tn{rel}}_{g,(s_A)}(X,D,A)$ with the degree $2g$ Hodge class $\la_g$. The latter is the top chern class of a rank $g$ (orbifold) vector bundle 
$$
\E\lra \ov\cM_{g,(s_A)}(X,D,A)
$$ 
whose fiber over $(u,\Si,z_1)$ with smooth domain is the space of holomorphic $1$-forms on $\Si$; more generally, $\E$ is defined to be the push-forward of the relative dualizing sheaf on the universal curve space $\cC_{g,(s_A)}(X,D,A)$.
These invariants are defined and studied in \cite[Sec.~5.8]{GPS} and \cite{B2}. 
The fact that the top lambda class is a natural insertion to consider in higher genus  stems from the fact that $N_{g,A}$ are 
related to the higher genus invariants of the threefold $X\times \P^1$, and (via localization) the top lambda class with the appropriate sign $(-1)^g$  measures of the difference between 2 and 3 dimensional obstruction theories; see \cite[p.13]{B2}.\\

\noindent
Let
\bEq{FAg_dfn}
F_A(q)=  \frac{\sin(h/2)}{h/2} \sum_{\substack{B\in H_2(X,\Z)\\ dB=A, d>0}} (-1)^{s_B-1}  \mu(d)~d^{2g-2} \sum_{g\geq 0}~N_{g,B}~h^{2g}
\eEq
where $\mu$ is the M\"obius function, $q=\tn{e}^{\mfi h/2}$, and $N_{0,A}=N_A$ are the genus $0$ invariants defined in (\ref{G0NA_e}).

\begin{conjecture}(c.f. \cite[Cnj.~8.3]{B2})\label{HgConj_cnj}
The function $F_A$ in (\ref{FAg_dfn}) is a well-defined rational function of $q$ invariant under $q\to q^{-1}$; furthermore,  it is a Laurent polynomial in $q$ 
with integer coefficients.
\end{conjecture}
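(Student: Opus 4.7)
The plan is to reduce Conjecture~\ref{HgConj_cnj} to the Gopakumar--Vafa conjecture for the local Calabi--Yau threefold $K_X$ (the total space of the canonical bundle of $X$). The first step is to invoke the correspondence of \cite{BFGW} alluded to in the remark preceding Conjecture~\ref{HgConj_cnj}: each higher-genus relative invariant $N_{g,B}$ equals, up to an explicit universal factor, the ordinary genus-$g$ Gromov--Witten invariant of $K_X$ in class $B$ (viewed as a class in $H_2(K_X,\Z)$ via the zero section). The insertion $(-1)^g\la_g$ in (\ref{NgA_e}) is precisely what the fiberwise $\C^*$-localization on $K_X\!\to\!X$ produces for the contribution of the normal direction; this is what matches the two virtual dimensions and accounts for the sign. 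After this substitution the inner sum $\sum_g N_{g,B}h^{2g}$ becomes the $h$-expansion of the $K_X$-Gromov--Witten generating function in class $B$.

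The second step is to apply the Gopakumar--Vafa formula for $K_X$,
\bsEq
\sum_{g\geq 0}\tn{GW}_{g,A}(K_X)\,h^{2g-2}=\sum_{\substack{B\in H_2(X,\Z)\\ dB=A,\,d>0}}\frac{1}{d}\sum_{g\geq 0} n^{\tn{BPS}}_{g,B}\big(2\sin(dh/2)\big)^{2g-2},
\esEq
with integer BPS invariants $n^{\tn{BPS}}_{g,B}\!\in\!\Z$ that vanish for $g$ above a bound depending on $B$. The multi-cover pattern $\tfrac{1}{d}(2\sin(dh/2))^{2g-2}$ is precisely what the Möbius weight $\mu(d)$ in (\ref{FAg_dfn}) inverts: after summing over $d$ with $dB\!=\!A$, the standard Möbius identity extracts the primitive BPS term, and the sign $(-1)^{s_B-1}$ absorbs the conversion between the log Calabi--Yau fourfold and threefold GW conventions (compare \cite[Sec.~5.8]{GPS} and \cite{B2}). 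The conjectural outcome is that $F_A(q)$ equals, up to a global sign, the primitive BPS generating function evaluated in the variable $2\sin(h/2)$. From this form the three claims are immediate: \emph{rationality}, because the $h^{-2}$ singularity produced by the $g\!=\!0$ term on the right is cancelled by the prefactor $\sin(h/2)/(h/2)$ and the remainder combines into a rational function of $q$ via $(2\sin(h/2))^2\!=\!2-q^2-q^{-2}$; \emph{the symmetry $q\!\leftrightarrow\! q^{-1}$}, because everything then is manifestly expressible in $q+q^{-1}$; and \emph{integrality together with Laurent polynomiality}, because the $n^{\tn{BPS}}_{g,B}$ are conjecturally integers and vanish in high genus.

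The chief obstacle is the Gopakumar--Vafa statement itself, which for $K_X$ is proved only in restricted settings (notably toric $X$ via equivariant localization, and further cases accessible via the Pandharipande--Thomas correspondence), so the argument above is only as strong as the threefold input. Moreover the symplectic tactic that worked in genus zero does not generalize: the super-rigidity Lemma~\ref{LItoSI_lmm} relied on the Hofer--Lizan--Sikorav vanishing of $\ker(\dbar)$ on a negative-degree line bundle over $\P^1$, which has no analogue in positive genus, so a log immersion of genus $g\!\geq\!1$ can have non-trivial log normal sections and multiple-cover contributions cannot be separated from primitive contributions by a generic almost complex structure. A purely symplectic proof would therefore require a substitute for super-rigidity in positive genus --- perhaps in the spirit of Wendl's work \cite{W2} or of the reduced genus-one invariants discussed in Section~\ref{Gen_s} --- and I expect this to be the main difficulty in any self-contained symplectic/analytic approach.
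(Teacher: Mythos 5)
The statement you were asked to prove is a conjecture; the paper offers no proof of it, so there is no argument of the author's to compare yours against. What you have written is a reduction rather than a proof: every claim ultimately rests on the Gopakumar--Vafa integrality conjecture for the local threefold $K_X$, which is open in the generality needed here, and you acknowledge this yourself. As a conditional strategy it is reasonable, and it is essentially the route the paper itself gestures at in the remark following Theorem~\ref{GW_th}, which cites \cite{BFGW} for the relation between the $N_{g,A}$ and the Gromov--Witten theory of $K_X$ and \cite{GWZ} for the comparison of the two notions of BPS invariants.

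That said, two intermediate steps are asserted with more confidence than they deserve. First, the local/relative correspondence of \cite{BFGW} in positive genus is not a genus-by-genus identity ``up to an explicit universal factor''; it is a relation between all-genus generating functions, so your substitution in step one already requires the full strength of that correspondence rather than a termwise matching of $(-1)^g\la_g$ with a fiberwise localization weight. Second, and more importantly, the claim that ``the standard M\"obius identity extracts the primitive BPS term'' is not correct as stated: the Gopakumar--Vafa multicover kernel $\tfrac{1}{d}\bigl(2\sin(dh/2)\bigr)^{2g-2}$ mixes genera, so the weight $\mu(d)\,d^{2g-2}$ appearing in (\ref{FAg_dfn}) does not invert it by a fixed-genus M\"obius inversion. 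The actual comparison between the coefficients of $F_A$ and the threefold BPS invariants is precisely the content of \cite[Lmm.~12]{GWZ}: the two systems of putative integers are related by a nontrivial integral matrix with integral inverse, interpreted via Donaldson--Thomas invariants of loop quivers. Without that input, your final identification of $F_A$ with the primitive BPS generating function --- and hence the integrality and Laurent-polynomiality claims --- does not follow even conditionally on Gopakumar--Vafa for $K_X$. Your closing paragraph explaining why the genus-zero super-rigidity argument via Lemma~\ref{LItoSI_lmm} has no analogue in positive genus is accurate and consistent with the discussion in Section~\ref{Gen_s}.
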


\noindent
Note that, letting $h \lra 0$, only the genus zero terms survive, $\sin(h/2)$ cancels with $h/2$, and thus the value $F_A(1)$, which is the sum of the coefficients of the Laurent polynomial, is the genus $0$ BPS invariant
$$
\wt{n}_{A}= \sum_{\substack{B\in H_2(X,\Z)\\ dB=A, d>0}} \frac{\mu(d)}{d^2}  (-1)^{s_B-1} N_{B} \in \Z.
$$
The inverse of the formula above is 
$$
N_A=\sum_{\substack{B\in H_2(X,\Z)\\ dB=A, d>0}} \frac{(-1)^{s_B(d-1)}}{d^2} ~\wt{n}_{B}
$$
This formula looks different from (\ref{mc-formula_e}) but the integrality of  $\wt{n}_A$ is equivalent to the integrality of the genus $0$ BPS invariants $n_A$ in   (\ref{mc-formula_e}). A non-elementary but conceptual proof of this equivalence follows from Donaldson-Thomas theory of quivers.\\

\noindent
In the following, first, we explain how the higher genus invariants $N_{g,A}$ can be analytically defined without using virtual techniques at the cost of using (Ruan-Tian)-type global perturbations. Then, we revisit the unperturbed setup and explain the issue of multi-cover contributions.\\

\noindent
From the analytic point of view,  for $g\geq 1$, in order to construct a (virtual) fundamental class for $\ov\cM_{g,(s_A)}(X,D,A)$, we can  use the logarithmic/relative Ruan-Tian perturbations as in \cite{IP1,FT2} in the following way. For $g,k\in \N$ with $2g+k\geq 3$ (which is the case for all $g\geq 1$ and $k=1$), let $\cT_{g,k}$ denote the Teichm\"uler space of genus $g$ Riemann surfaces with $k$ marked points (punctures) and by $\mc{G}_{g,k}$ the corresponding mapping class group. We have
$$
\cM_{g,k}=\cT_{g,k}/ \mc{G}_{g,k}.
$$
Assume $g\!=\!g_1\!+\!g_2$ and $k\!=\!k_1\!+\!k_2$ with $2g_i\!+\!k_i\!\geq\!3$ for $i\!=\!1,2$. For any decomposition $S_1\cup S_2$ of $\{1,2,\ldots,k\}$ with $|S_i|=k_i$, there exists a canonical immersion 
\bEq{B1_e}
\iota=\iota_{S_1,S_2}\colon \ov\cM_{g_1,k_1+1}\times \ov\cM_{g_2,k_2+1}\lra \partial \ov\cM_{g,k}
\eEq
which assigns to a pair of marked  curves 
$$
\big(C_i\!=\![\Si_i,z_{i,1},\ldots,z_{i,k_i+1}]\big)_{i=1,2},
$$ 
the marked  curve
$$
\aligned
&C=[\Si,z_1,\ldots,z_k], \quad \Si\!=\! \Si_1\!\sqcup\! \Si_2/ z_{1,k_1+1}\!\sim \!z_{2,k_2+1}\\
&\{z_1,\ldots,z_k\}\!=\! \{z_{1,1},\ldots,z_{1,k_1}\}\cup  \{z_{2,1},\ldots,z_{2,k_2}\},
\endaligned
$$ 
so that the remaining marked points are renumbered by $\{1,\ldots,k\}$ according to the decomposition $S_1\!\cup\!S_2$. There is also another natural immersion 
\bEq{B2_e}
\de\colon \ov\cM_{g-1,k+2}\lra \partial \ov\cM_{g,k}
\eEq
which is obtained by gluing  together the last two marked points.

\begin{definition}[{\cite[Dfn.~2.1]{Z4}}]\label{UnivFamily_dfn}
Let $g,k\!\in\!\N$ with $2g\!+\!k\!\geq\!3$, and 
\bEq{Cover_e}
p\colon \ov{\mf{M}}_{g,k}\lra\ov\cM_{g,k}
\eEq
be a finite branched cover in the orbifold category.
A \textbf{universal family over} $ \ov{\mf{M}}_{g,k}$ is a tuple 
\bEq{UFamily_e}
\bigg(\pi\colon\ov{\mf{U}}_{g,k}\lra \ov{\mf{M}}_{g,k}, \mf{z}_1,\ldots,\mf{z}_k\bigg)
\eEq
where $\ov{\mf{U}}_{g,k}$ is a complex projective variety and $\pi$ is a projective morphism with disjoint sections $\mf{z}_1,\ldots,\mf{z}_k$ such that for each $c\!\in\!\ov{\mf{M}}_{g,k}$  the tuple 
$$
C\!=\!\big(\Si\!=\!\pi^{-1}(c),  (z_1,\ldots,z_k)=(\mf{z}_1(c),\ldots,\mf{z}_k(c))\big)
$$ 
is a stable $k$-marked genus $g$ curve  whose equivalence class is $[C]\!=\!p(c)$.
\eDf

\bDf{RefFamily_dfn}
Let $g,k\!\in\!\N$ with $2g\!+\!k\!\geq\!3$. A cover (\ref{Cover_e}) is \textbf{regular} if
\bEn
\item it admits a universal family,
\item each topological component of $p^{-1}\big(\cM_{g,k}\big)$ is the quotient of $\cT_{g,k}$ by a subgroup of $\mc{G}_{g,k}$,
\item\label{sepnode_l} for each boundary divisor (\ref{B1_e}) we have 
$$
\big(\ov\cM_{g_1,k_1+1}\times \ov\cM_{g_2,k_2+1}\big)\times_{(\iota,p)}\ov{\mf{M}}_{g,k} \approx \ov{\mf{M}}_{g_1,k_1+1}\times \ov{\mf{M}}_{g_2,k_2+1},
$$ 
for some regular covers $\ov{\mf{M}}_{g_i,k_i+1}$ of  $\ov\cM_{g_i,k_i+1}$, and
\item\label{selfnode_l} for the boundary divisor (\ref{B2_e}) we have 
$$
\ov\cM_{g-1,k+2}\times_{(\de,p)} \ov{\mf{M}}_{g,k} \approx \ov{\mf{M}}_{g-1,k+2},
$$ 
for some regular cover $\ov{\mf{M}}_{g-1,k+2}$ of $\ov\cM_{g-1,k+2}$.
\eEn
\eDf
\noindent
The last two conditions are inductively well-defined. 
The existence of such regular covers is a consequence of \cite[Prp.~2.2, Thm.~2.3, Thm.~3.9]{BoPi}; see also moduli space of curves with \textit{level $n$ structures} in \cite[p.~285]{Mu}. In the genus $0$ case, for each $k\!\geq\!3$, the moduli space $\ov\cM_{0,k}$ itself is smooth and the universal curve
$$
\ov\cC_{0,k}=\ov\cM_{0,k+1}\lra \ov\cM_{0,k}
$$  
is already a universal family. The regular covers are only branched over the boundaries of the moduli space. Furthermore, the total space of a universal family as in (\ref{UFamily_e}) over a regular cover only has singularities of the form
$$
\{(x,y,t)\in \C^3\colon~xy=t^m\} \lra \C, \qquad (x,y,t)\lra t
$$
at the nodal points of the fibers of $\pi$. In the original approach of \cite{RT}, for dealing with such singularities they consider embeddings of a universal family into $\P^N$ for  sufficiently large $N$. \\

\noindent
Fix a regular covering (\ref{Cover_e}) and a universal family (\ref{UFamily_e}). Denote by 
$$
\ov{\mf{U}}_{g,k}^\star\subset \ov{\mf{U}}_{g,k}
$$
the complement of the nodes of the fibers of the projection map $\pi$ in (\ref{UFamily_e}). Denote by 
$$
T_{g,k}= \tn{Ker}~\nd (\pi|_{\ov{\mf{U}}_{g,k}^\star}) \lra \ov{\mf{U}}_{g,k}^\star
$$
the vertical tangent bundle. The latter is a complex line bundle; we denote the complex structure by $\mfj_{\mf{U}}$. 
Then
$$
\Om^{0,1}_{g,k}:= (T_{g,k},-\mfj_{\mf{U}})^* \lra \ov{\mf{U}}_{g,k}^\star
$$
is the complex line bundle of  vertical (or relative) $(0,1)$-forms. It is possible to extend this construction to the nodal points by allowing simple poles and dual residues, or by embedding $\ov{\mf{U}}_{g,k}$ into some $\P^M$ as in \cite{RT}. \\

\noindent
Let $(X,\om)$ be a  symplectic manifold and $J$ be an $\om$-tame almost complex structure on $X$.  The classical space of perturbations considered in \cite{RT} (following the modification in \cite{Z4}) is the infinite dimensional linear space
\bEq{cHgk_e}
\cH_{g,k}(X,J)\!=\!\big\{ \nu\! \in\! \Gamma\big(\ov{\mf{U}}_{g,k}^\star\times X, \pi_1^*\Om^{0,1}_{g,k}\otimes_\C \pi_2^*TX\big)~~\tn{s.t.}~~\tn{supp}(\nu)\!\subset(\ov{\mf{U}}_{g,k}^\star-\bigcup_{a=1}^k \tn{Im}(\mf{z}_a))\!\times\! X \big\},
\eEq
where $\pi_1,\pi_2$ are projection maps from $\ov{\mf{U}}_{g,k}^\star\times X$ onto the first and second components, respectively, and $\tn{supp}(\nu)$ is the closure of the complement of the vanishing locus of $\nu$ in the compact space $\ov{\mf{U}}_{g,k}\times X$. Let $\cH_{g,k}(X,\om)$ denote the space of tuples $(J,\nu)$ where $J$ is $\om$-tame and $\nu\!\in\!\cH_{g,k}(X,J)$. Note that given $\nu$ and a boundary component as in Definition~\ref{RefFamily_dfn}.\ref{sepnode_l} (resp.  Definition~\ref{RefFamily_dfn}.\ref{selfnode_l}), the restriction of $\nu$ to $ \ov{\mf{M}}_{g_1,k_1+1}$ gives a perturbation term in $\cH_{g_1,k_1}(X,J)$ (resp. $\cH_{g-1,k+2}(X,J)$).

\bDf{JnuModuli_dfn}
Suppose $g,k\!\in\!\N$ with $2g\!+\!k\!\geq\!3$, $\ov{\mf{U}}_{g,k}$ is a universal family as in (\ref{UFamily_e}), $(X,\om)$ is a symplectic manifold, $A\!\in\!H_2(X,\Z)$, and $(J,\nu)\!\in\!\cH_{g,k}(X,\om)$. A \textbf{degree $A$ genus $g$ $k$-marked $(J,\nu)$-map} is a tuple 
\bEq{JnuMap_e}
f=\Big( \phi ,u, C=\big(\Si, (z_1,\ldots,z_k)\big) \Big)
\eEq
where $C$ is a nodal genus $g$  $k$-marked complex curve, $\phi\colon\!\Si\! \lra\! \ov{\mf{U}}_{g,k}$ is a holomorphic map onto a fiber of $\ov{\mf{U}}_{g,k}$ preserving the marked points,  and $u\colon\!\Si\!\lra\!X$ represents the homology class $A$ and satisfies 
$$
\dbar u =(\phi,u)^*\nu.
$$
\eDf
\noindent
Two $k$-marked $(J,\nu)$-holomorphic maps $\big( \phi_1 ,u_1, C_1\big)$ and $\big( \phi_2 ,u_2,C_2\big) $
are \textbf{equivalent} if there exists a holomorphic identification $h$ of $C_1$ and $C_2$
such that $(\phi_1,u_1)\!=\!(\phi_2,u_2)\circ h$. A $(J,\nu)$-holomorphic map is \textbf{stable} if it has a finite automorphism group. A \textbf{contracted} component of $\Si$ in (\ref{JnuMap_e}) is a smooth component whose image under the map $\phi$ is just a point. 
A map (\ref{JnuMap_e}) is stable  if and only if the degree of the restriction of $u$ to every contracted component of $\Si$ containing only one or two special (nodal or marked) points is not zero.
If (\ref{JnuMap_e}) is stable, every connected cluster of contracted components is a tree of spheres, with a total of at most $2$ special\footnote{either a marked point or a nodal point connecting the cluster to an irreducible component of $\Si$ outside the cluster.} points, at least one of which is a nodal point. 
For generic $\nu$, the only components of $\Si$ contributing non-trivially to the automorphism group of (\ref{JnuMap_e}) are the contracted components.\\

\noindent
In order to perturb $J$-holomorphic curves with tangency condition relative to a (smooth or normal crossing) divisor $D\subset X$, we need to restrict to subset of $\cH_{g,k}(X,J)$ consisting of perturbations that are ``compatible" with $D$ in suitable sense. In \cite{IP1,FZ1}, the latter is expressed in terms of a first order condition on $\nu$ along $D$. In \cite{FT2}, we consider the logarithmic perturbation space
\bEq{cHgk_e2}
\bigg\{ \nu_{\log}\! \in\! \Gamma\big(\ov{\mf{U}}_{g,k}^\star\times X, \pi_1^*\Om^{0,1}_{g,k}\otimes_\C \pi_2^*TX(-\log D)\big)~~\tn{s.t.}~~\tn{supp}(\nu_{\log})\!\subset\big(\ov{\mf{U}}_{g,k}^\star-\bigcup_{a=1}^k \tn{Im}(\mf{z}_a)\big)\!\times\! X \bigg\}.
\eEq
Associated to each $\nu_{\log}$ we get a classical perturbation term 
\bEq{AsscNu_e}
\nu\!=\!\iota(\nu_{\log})\!\in\! \cH_{g,k}(X,J),
\eEq
where by abuse of notation $\iota$ denotes the $\C$-linear homomorphism  
$$
\pi_1^*(\Om^{0,1}_{g,k})\otimes_\C \pi_2^*TX(-\log D)\lra \pi_1^*(\Om^{0,1}_{g,k})\otimes_\C \pi_2^*TX
$$
induced by (\ref{iota_e}). Define $\cH_{g,\mfs}(X,D)$ to be the space of tuples $(\om,J,\nu_{\log})$ where $(\om,J)\in \tn{AK}(X,D)$ and $\nu_{\log}$ belongs to (\ref{cHgk_e2}). For such a tuple $(\om,J,\nu_{\log})$, it is shown in \cite{FT2} that the moduli space $\cM_{g,\mfs}(X,D,A,\nu)$ consisting of equivalence classes of genus $g$ $k$-marked $(J,\nu)$-holomorphic maps of tangency type $\mfs$ is defined and has a natural compactification $\ov\cM^{\log}_{g,\mfs}(X,D,A,\nu)$ satisfying properties similarly to the unperturbed case. The relative moduli space $\ov\cM^{\tn{rel}}_{g,\mfs}(X,D,A,\nu)$   can be defined similarly and admits a surjective map 
$$
\ov\cM^{\tn{rel}}_{g,\mfs}(X,D,A,\nu)\lra \ov\cM^{\log}_{g,\mfs}(X,D,A,\nu)
$$
as in the unperturbed case.
\\

\noindent
If $(X,D)$ is a symplectic log CY fourfold, then it is semi-positive in the sense of \cite[Dfn.~4.7]{FZ1} or equally \cite[Dfn.~1.6]{FT2}. Therefore, by  \cite[Prp.~1.7 and Crl.~1.9]{FT2}, we have the following result.

\bTh{TransGamma_thm} 
Suppose $(X,D)$ is a symplectic log CY fourfold,  $A\!\in\!H_2(X,\Z)$, and $g>0$. 
Then, for any given choice of universal family in (\ref{UFamily_e}), there exists a Baire set of second category $\cH^{\tn{reg}}_{g,(s_A)}(X,D)\!\subset\!\cH_{g,(s_A)}(X,D)$ such that for each $(\om,J,\nu_{\log})\!\in\!\cH^{\tn{reg}}_{g,(s_A)}(X,D)$, \bEn
\item the moduli space $\cM^{\star}_{g,(s_A)}(X,D,A)$ is cut transversely and is a smooth manifold of real dimension 
$2g$,
\item the image of  $\ov\cM_{g,(s_A)}(X,D,A)-\cM^{\star}_{g,(s_A)}(X,D,A)$ under the forgetful map
\bEq{st_e}
\tn{st}\colon\!\ov\cM_{g,(s_A)}(X,D,A)\!\lra \ov\cM_{g,1}
\eEq
lies in the image of smooth maps from finitely many smooth even-dimensional manifolds of at least $2$ real dimension less than $2g$,
\item and consequently, the map (\ref{st_e}) defines a pseudo-cycle of real dimension $2g$ in $\ov\cM_{g,1}$ whose integral homology class $[\tn{st}]$ only depends on the deformation equivalence class of  $(X,D,\om)$ and the degree $\tn{deg}~p$ of the regular covering  (\ref{Cover_e}) used to define $\nu_{\log}$.
\eEn
\eTh

\begin{corollary}
The rational homology class
$$
\big[\ov\cM_{g,(s_A)}(X,D,A)\big]^{\tn{VFC}}\equiv \frac{1}{\tn{deg}~p}[\tn{st}]\in H_{2g}(\ov\cM_{g,1}, \Q),
$$
is an invariant of the deformation equivalence class of $(X,\om,D)$ and can be used to define (\ref{NgA_e}). Furthermore, for every $g>0$, there exists a constant $c_g$ (independent of the choice of $(X,D)$ and $A$) such that 
$$
c_g N_{g,A}\in \Z \qquad \forall A\in H_2(X,\Z),~\tn{with}~s_A>0.
$$
\end{corollary}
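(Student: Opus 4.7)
The plan is to derive both assertions from Theorem~\ref{TransGamma_thm} together with local freeness of the Hodge bundle on a regular cover of $\ov\cM_{g,1}$.

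For the first assertion, Theorem~\ref{TransGamma_thm}(3) gives that the integer pseudo-cycle class $[\tn{st}]$ depends only on the deformation class of $(X,\om,D)$ and on $\tn{deg}~p$. To extract a rational class independent of the cover itself, I would compare two regular covers $p_i\colon \ov{\mf{M}}^i_{g,1}\!\to\!\ov\cM_{g,1}$ ($i=1,2$) through a common regular refinement $p_{12}$ (for instance obtained from a level-$n$ structure with $n$ divisible by the indices of both $p_1$ and $p_2$). The perturbation $\nu_{\log}$ and the associated $(J,\nu)$-moduli spaces pull back along the projections to the refinement, the resulting pseudo-cycle class pushes forward to $\tn{deg}(p_{12}/p_i)\cdot[\tn{st}]_i$ on $\ov{\mf{M}}^i_{g,1}$, and hence $\frac{1}{\tn{deg}~p_i}[\tn{st}]_i$ agrees for $i=1,2$ after pushforward to $\ov\cM_{g,1}$. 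This shows $[\ov\cM_{g,(s_A)}(X,D,A)]^{\tn{VFC}}$ is a well-defined deformation invariant, so its pairing with $(-1)^g\lambda_g$ yields a well-defined rational number $N_{g,A}$.

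For the integrality statement, I will fix once and for all a regular cover $p\colon \ov{\mf{M}}_{g,1}\!\to\!\ov\cM_{g,1}$ admitting a universal family (depending only on $g$) and set $c_g:=\tn{deg}~p$. On $\ov{\mf{M}}_{g,1}$, the pullback of the Hodge bundle $\E$ is an honest rank-$g$ holomorphic vector bundle: by Definition~\ref{UnivFamily_dfn}, the universal family is a projective variety whose only singularities are of the form $\{xy=t^m\}$, and the pushforward of its relative dualizing sheaf is locally free of rank $g$. Hence $p^*\lambda_g=c_{\tn{top}}(p^*\E)\in H^{2g}(\ov{\mf{M}}_{g,1},\Z)$ is an integer cohomology class. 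Moreover, each element $(\phi,u,C)$ of $\cM^\star_{g,(s_A)}(X,D,A,\nu)$ remembers the map $\phi$ into a specific fiber of the universal family, so the pseudo-cycle $\tn{st}$ canonically lifts to a pseudo-cycle $\tn{st}^\sharp\colon \cM^\star_{g,(s_A)}(X,D,A)\lra \ov{\mf{M}}_{g,1}$ with $p\circ\tn{st}^\sharp=\tn{st}$, representing an integer class $[\tn{st}^\sharp]\in H_{2g}(\ov{\mf{M}}_{g,1},\Z)$. The projection formula then yields
\[
c_g\,N_{g,A}\;=\;\big\langle(-1)^g\lambda_g,\,[\tn{st}]\big\rangle\;=\;\big\langle(-1)^g p^*\lambda_g,\,[\tn{st}^\sharp]\big\rangle\;\in\;\Z,
\]
as required.

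The main obstacle is the verification that $\tn{st}^\sharp$ is genuinely a pseudo-cycle in $\ov{\mf{M}}_{g,1}$ (rather than merely after pushing forward to $\ov\cM_{g,1}$): one needs to check that the bubble and nodal strata bounded in Theorem~\ref{TransGamma_thm}(2) still have the correct real codimension on the cover. This is built into conditions (3)-(4) of Definition~\ref{RefFamily_dfn} on boundary pullbacks of regular covers, but the compatibility must be spelled out carefully stratum-by-stratum. A subsidiary and easier point is the local freeness of $p^*\E$ on $\ov{\mf{M}}_{g,1}$, which is standard for level-$n$ Mumford covers but should be verified directly from the nodal singularity model described after Definition~\ref{RefFamily_dfn}.
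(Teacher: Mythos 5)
Your proposal is correct and follows the route the paper intends: the paper states this corollary without proof as an immediate consequence of Theorem~\ref{TransGamma_thm}, and your argument supplies the standard Ruan--Tian details (comparison of regular covers through a common refinement for well-definedness of $\frac{1}{\tn{deg}\,p}[\tn{st}]$, and the tautological lift $\tn{st}^\sharp$ to $\ov{\mf{M}}_{g,1}$ paired with the integral class $p^*\la_g=c_{\tn{top}}(p^*\E)$ for the integrality of $c_g N_{g,A}$ with $c_g=\tn{deg}\,p$). The two points you flag as needing care (the pseudo-cycle property of $\tn{st}^\sharp$ upstairs, which follows since $p$ is finite, and local freeness of $p^*\E$) are indeed the only nontrivial verifications, and both go through.
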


\noindent
Using the perturbed setting above is an effective method for defining the invariants $N_{g,A}$ but it is not useful for defining integer-valued invariants and understanding the geometric meaning of the integers predicted by Conjecture~\ref{HgConj_cnj}. For $g>0$, given a $1$-marked smooth curve $(\Si,p)$, the relative space $\ov\cM_{g,(d)}(\Si,p,[d\Si])$ of genus $g$ degree $d$ covers fully ramified over $p$ is not necessarily a smooth orbifold of the correct complex dimension. Furthermore, the following observations illustrate the complicated nature of relative multiple-cover maps in higher genus.\\

\noindent
First, we have the following weaker analogue of Lemma~\ref{LItoSI_lmm2} which is a corollary of \cite[Thm~1']{HLS}.

\bLm{LItoSI_lmm2}
Suppose $(X,D)$ is a symplectic log Calabi-Yau fourfold and 
$$
f\!=\![u,\Si,z_1]\!\in\! \cM_{g,(s_A)}(X,D,A).
$$
If $f$ is a log immersion then $\tn{deg}(\cN_f)=2g-1$,
$$
\Def_{\log}(f)=H^0(\cN_f)\quad\tn{and}\quad  \Ob_{\log}(f)=H^1(\cN_f)= 0,
$$
where $\cN_f$ is the logarithmic normal bundle defined in (\ref{LogNorBundle_e}). In other words, $\cM_{g,(s_A)}(X,D,A)$ is cut  transversally in a neighborhood of every log immersion.
\eLm

\noindent
In \cite{R-S}, given an elliptic curve $\Si$ and an integer $g\geq 2$, the author constructs a genus $g$ curve $\Si'$ and a map $h\colon \Si'\lra \Si$ of degree $2g-1$, so that $h$ is ramified above exactly one point of $\Si$, and so that the local monodromy above that point is of type a $(2g-1)$-cycle. Together with Lemma~\ref{LItoSI_lmm2}, we get the following observation. Suppose $u\colon (\Si,p)\lra (X,D)$ is a genus $1$ log immersion in $\cM_{1,(s_A)}(X,D,A)$ and $h\colon (\Si',z_1)\lra (\Si,p)$ is a degree $2g-1$ map ramified at exactly $p$. Then, by Lemma~\ref{LItoSI_lmm2}, the moduli space $\cM_{g,(s_A)}(X,D,A)$ is cut transversely in a neighborhood of the multiple-cover map $u\circ h\colon (\Si',z_1)\lra (X,D)$. Therefore, $u$ can not have a multiple-cover contribution to an integral-valued invariant arising from $\ov\cM_{g,(s_A)}(X,D,A)$.  
Nevertheless, for $g=1$, the possible multiple-covers are well-understood and moduli space $\ov\cM_{1,(s_A)}(X,D,A)$ and the invariant $N_{1,A}$ can be explicitly described as follows.\\

\noindent
For $A\neq 0$, let $\cM'_{1,(s_A)}(X,D,A)$ denote the subset of $\ov\cM_{1,(s_A)}(X,D,A)$ consisting of the relative/log stable maps $[u,C=(\Si,z_1)]$ such that $\Si$ is an elliptic curve $E$ with $1$ rational component attached directly to it and $u|_{E}$ is constant (therefore, the restriction of $u$ to the rational component has degree $A$). In other words, 
$$
\cM'_{1,(s_A)}(X,D,A)\cong \cM_{0,(0,s_A)}(X,D,A)\times \cM_{1,1}.
$$
We denote by $\ov\cM'_{1,(s_A)}(X,D,A)$ the closure of $\cM'_{1,(s_A)}(X,D,A)$ in $\ov\cM_{1,(s_A)}(X,D,A)$. 
By \cite[Thm~1.2]{Z1}, the moduli space $\ov\cM_{1,(s_A)}(X,D,A)$ admits a closed subspace $\ov\cM_{1,(s_A)}^{\tn{main}}(X,D,A)$ that contains the virtually main stratum $\cM_{1,(s_A)}(X,D,A)$ and is invariant under deformations of $J$. Furthermore, if $(X,D)$ is a log Calabi-Yau fourfold and $J$ is generic, the moduli space $\ov\cM_{1,(s_A)}(X,D,A)$ decomposes as 
\bEq{g1Dec_e}
 \ov\cM_{1,(s_A)}(X,D,A)=\ov\cM_{1,(s_A)}^{\tn{main}}(X,D,A)\cup \ov\cM'_{1,(s_A)}(X,D,A).
\eEq
By (\ref{g1Dec_e}), we have 
$$
N_{1,A}=N_{1,A}^{\tn{main}}+N_{1,A}'
$$
such that $N_{1,A}'$  is a function of $\{n_B\}_{dB=A}$ determined by the lemma below. 
\begin{lemma}
We have 
$$
N_{1,A}'= \sum_{\substack{dB=A\\ d>0}}\frac{(2-s_B)}{24}{d(s_B-1)-1 \choose d-1} n_{B} \in \frac{1}{24}\Z.
$$
\end{lemma}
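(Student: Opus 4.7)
The plan is to compute $N_{1,A}'$ as the contribution to $N_{1,A}=\int(-\lambda_1)$ coming from the boundary stratum
$\ov\cM'_{1,(s_A)}(X,D,A)\cong \ov\cM_{0,(0,s_A)}(X,D,A)\times \ov\cM_{1,1}$,
where the extra non-contact marked point $z_0$ on the genus-$0$ factor serves as the attaching node for the contracted elliptic component. The two main ingredients are a standard genus-one residue analysis (which, via the Hodge integral $\int_{\ov\cM_{1,1}}\lambda_1=\tfrac{1}{24}$, expresses $N_{1,A}'$ as $\tfrac{1}{24}$ times a tautological integral on $\ov\cM_{0,(0,s_A)}(X,D,A)$) and the multi-cover $S^1$-localization developed in Section~\ref{loc_sec}, which evaluates that tautological integral.

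More precisely, the node normalization sequence, together with $c_1(TX(-\log D))=0$ and $\lambda_1^2=0$ on~$\ov\cM_{1,1}$, reduces the excess obstruction coming from the contracted elliptic to a rank-one complex bundle of the form $\E^\vee\otimes \ev_0^*\cN$, where $\cN$ is the classical normal line bundle of the underlying simple rational curve at~$z_0$; after integrating out the $\ov\cM_{1,1}$-factor this yields
$$
N_{1,A}'=-\tfrac{1}{24}\!\int_{[\cM_{0,(0,s_A)}(X,D,A)]^{\tn{VFC}}}\!\!\ev_0^*c_1(\cN).
$$
I then decompose $\ov\cM_{0,(0,s_A)}(X,D,A)$ following Theorem~\ref{Compactification_th} into contributions supported on $\ov\cM_{0,(0,d)}(\P^1,\infty,[d\P^1])^*\times\cM^\star_{0,(s_B)}(X,D,B)$ (with $dB=A$ and multiplicity $n_B$), and apply the $S^1$-localization of Section~\ref{loc_sec} with the lift of Remark~\ref{GVsteup_rmk} (weight $-s_B$ on $\cN_XD|_q$). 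Using $\cN_{\ov u}\cong\cO(s_B-2)$ on the underlying simple curve and the same partition-indexed fixed-locus structure as in Section~\ref{loc_sec} but now decorated with the position of $z_0$, assembling all fixed-locus contributions gives a combined per-$(B,d)$ contribution of $\tfrac{(2-s_B)}{24}\binom{d(s_B-1)-1}{d-1}n_B$; summing over $(d,B)$ with $dB=A$ produces the claimed formula.

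The main obstacle is the first step: the naive guess of a rank-two excess obstruction $\E^\vee\otimes \ev_0^*TX(-\log D)$ would lead to $-\tfrac{1}{24}\int\ev_0^*c_2(TX(-\log D))$, which vanishes against $\cM_{0,(0,s_A)}(X,D,A)$ by dimension reasons (the latter has complex virtual dimension one). The correct rank-one reduction is forced by the matching at the node between the logarithmic tangent direction of $u|_{C_1}$ at $z_0$ and the constant value of $u|_{C_2}$, and must be formalized within the logarithmic deformation-obstruction theory of Section~\ref{DO_e}. As a sanity check, the simple-curve case $d=1$ gives $\int_{\P^1}c_1(\cN_{\ov u})=s_A-2$, so each simple $\ov u\in\cM^\star_{0,(s_A)}(X,D,A)$ contributes $\tfrac{2-s_A}{24}$ to $N_{1,A}'$, matching the $d=1$ term of the claimed formula; the higher-$d$ contributions then follow from the same binomial combinatorics (with the appropriate divisor-equation pushforward of $\ev_0^*c_1(\cN_{\ov u})$ over the multi-cover moduli) already encountered in the derivation of $\tn{mc}(d,s_B)$ in Section~\ref{loc_sec}.
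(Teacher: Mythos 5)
Your overall strategy coincides with the paper's: both isolate the contracted--elliptic--tail stratum, use $\int_{\ov\cM_{1,1}}\la_1=\tfrac{1}{24}$, and feed the result into the multiple-cover localization of Section~\ref{loc_sec} (the paper evaluates the equivariant integral (\ref{g=1integral}) over the fixed loci $E\cup_{q\sim 0}\P^1$ with the tail attached at the ramification point $0$ of the Galois cover). Your $d=1$ sanity check is correct. The gap lies in the intermediate formula $N_{1,A}'=-\tfrac{1}{24}\int_{[\cM_{0,(0,s_A)}(X,D,A)]^{\tn{VFC}}}\ev_0^*c_1(\cN)$. Beyond the fact that the rank-one reduction to $\E^\vee\otimes\ev_0^*\cN$ is asserted rather than derived (as you acknowledge), the formula fails to reproduce the $d>1$ terms: on the component $\ov\cM_{0,(d)}(\P^1,\infty,[d\P^1])^*\times\{\ov u\}$ the integral of $\ev_0^*c_1(\cN_{\ov u})$ over a fiber of the universal curve is $d(s_B-2)$, so pushing forward and capping with $e(\tn{Ob}(d,s_B))$ gives $d(s_B-2)\cdot\tn{mc}(d,s_B)$, hence a contribution of $\tfrac{(2-s_B)}{24d}\binom{d(s_B-1)-1}{d-1}n_B$ --- short of the claimed answer by a factor of $d$.

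That missing factor of $d$ is precisely what your appeal to ``the same binomial combinatorics'' hides and what the paper's proof tracks explicitly: relative to the genus-zero computation (\ref{LCm_e}), attaching the elliptic tail at $0$ replaces $\tn{aut}(\P^1,\infty)$ (which has a moving weight $1/d$) by the weight-zero $\tn{aut}(\P^1,0\cup\infty)$, and introduces the node-smoothing factor $\tfrac{1}{1/d-\psi_1}$; each change contributes a factor of $d$, giving $d^2$ against the $1/d^2$ in $\tn{mc}(d,s_B)$, while the elliptic obstruction contributes only the weight product $(s_B-2)\cdot 1$ with no $d$. A genuinely non-equivariant tautological formula on $\cM_{0,(0,s_A)}(X,D,A)$ would have to encode the node-smoothing line $L_{z_0}^\vee\otimes L_q^\vee$, whose Euler class $-\psi_{z_0}-\psi_q$ is not invertible outside the equivariant setting, so the clean reduction you propose cannot be taken at face value. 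To repair the argument, either carry out the equivariant computation on the decorated fixed loci with the lift of Remark~\ref{GVsteup_rmk} (as the paper does), or correct your intermediate formula by the node and automorphism contributions before evaluating.
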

\begin{proof}
For the same choice of weight as in (\ref{mcdssB_e}), the contribution of a logarithmically immersed rational curve $[\ov{u},(\P^1,\infty)]\in \cM^\star_{0,(s_B)}(X,D,A)$ to $N_{1,A=dB}'$ has the form 
\bEq{g=1integral}
\int_{ \ov\cM_{1,1}}  \Big( \frac{1}{d} \frac{e(\tn{Ob}_{\log}(f)^{\tn{mov}})e(\tn{aut}(C)^{\tn{mov}})}{e(\tn{Def}(f)^{\tn{mov}}) e(\tn{Def}_{\log}(C)^{\tn{mov}})}\times  (-\la_1)\Big)
\eEq
where $C=(E\cup_{q\sim 0} \P^1, \infty)$ is the $1$-marked nodal domain obtained by attaching $\P^1$ and an elliptic curve $E$ at the points $0$ and $q$, respectively, with the marked point $\infty$ on the rational component. Over $\P^1$, the map $f$ is the $d$-fold Galois cover $(\P^1,0\cup \infty)\lra (\P^1, p_0\cup p_1)$, and over $E$ the map is constant $p_0$.
Compared to the calculations leading to (\ref{LCm_e}), the node in the domain contributes a factor of 
$$
\frac{1}{\frac{1}{d}-\psi_1}= d \frac{1}{1-d\psi_1} 
$$
to the moving part $\tn{Def}(C)^{\tn{mov}}$ and the automorphism factor $\tn{aut}(C)$ of $(\P^1,0\cup \infty)$ is the weight zero representation
$$
H^0(\P^1,T\P^1(-\log  0\cup \infty))=\C \cdot (z_0\partial z_0).
$$ 
These two changes each contribute an extra factor of $d$ to the resulting fraction in (\ref{LCm_e}).
Since the weights on the tangent space of the (standard) normal bundle to $\ov{u}$ at $p_0$ are $s_B-2$ and $1$, by \cite[p.~550]{Mir}, the obstruction bundle over $E$ contributes a factor of 
\bEq{ObsofE_e}
(s_B-2)~c(\E^\vee)(1/(s_B-2))~c(\E^\vee)(1)
\eEq
to $e(\tn{Ob}_{\log}(f)^{\tn{mov}}/e(\tn{Def}_{\log}(f)^{\tn{mov}})$ where $\E$ is the Hodge bundle over $E$ and 
$$
c(Q)(t) = 1 + tc_1(Q) + \cdots + t^rc_r(Q)
$$
for any complex vector bundle $Q$ of rank $r$. By dimensional reason, only the degree zero term $s_B-2$ of (\ref{ObsofE_e}) contributes to the integral (\ref{g=1integral}).
Since 
$$
\int_{\ov\cM_{1,1}}\la_1=\frac{1}{24},
$$
we conclude that (\ref{g=1integral}) is equal to 
$$
\frac{(2-s_B)}{24}{d(s_B-1)-1 \choose d-1}.
$$
\end{proof}

\begin{remark}
For $s_B=1$, we get 
$$
\frac{(2-s_B)}{24}{d(s_B-1)-1 \choose d-1}= \frac{(-1)^{d-1}}{24}
$$
which is the number calculated in \cite[(13)]{BP}.
\end{remark}

\noindent
\noindent
The reduced moduli space $\ov\cM^{\tn{main}}_{1,(s_A)}(X,D,A)$ may still include components of higher than expected dimension. For instance, if $A=dB$, $\ov\cM^{\tn{main}}_{1,(s_A)}(X,D,A)$ includes multiple-covers of the form
$$
u\colon (E,q)\stackrel{h}{\lra} (\P^1,\infty) \stackrel{\ov{u}}{\lra} (X,D),
$$
where $h\in \cM_{1,(d)}(\P^1,\infty,[d])$ is a genus $1$ multiple-cover of $\P^1$ fully ramified at $\infty$. Note that 
$$
\dim_\C \cM_{1,(d)}(\P^1,\infty,[d])=d+1.
$$
However, since the class $\la_1$ vanishes on the locus of curves with loops (c.f. \cite[p.~351]{GPS}), the (localization) contribution of such multiple-covers to $N_1^{\tn{main}}$ is zero. Also, unlike in classical GW theory, isogenies of elliptic curve do not appear because of the maximal tangency condition.
Also, note that by fixing the complex structure of the domain elliptic curve, we can reduce the complex dimension to zero and avoid the integration of $\la_1$ at the cost of multiplying by the constant 
$$
\tn{PD}_{\ov\cM_{1,1}}(\la_1)\in H_0(\ov\cM_{1,1},\Q)\cong \Q.
$$ 
Therefore, we expect the reduced genus-one GW-invariants $N_{1,A}^{\tn{main}}$ arising from $\ov\cM_{1,(s_A)}^{\tn{main}}(X,D,A)$ to be a fixed multiple of an integer count of genus one maximally-tangent logarithmically immersed curves in $(X,D)$ with a fixed complex structure on the domain.

\appendix 
\section{Relative Compactification}\label{RC_A}

In this section, following the description in \cite{FZ1}, we briefly review the construction of the relative stable map moduli spaces $\ov\cM_{g,\mfs}^{\tn{rel}}(X,D,A)$ and clarify the notation used in the rest of the paper. For more details, we refer to \cite{FZ1,IP1,FT1}.\\

\noindent
Suppose $D\!\subset\!(X,\om)$ is a smooth symplectic divisor, $J$ is an $\om$-tame almost complex structure on $X$ compatible with $D$, and $\dbar_{\cN_XD}$ is the $\dbar$-operator arising from $J$ on the normal bundle 
\bEq{Firstpi_e}
\pi\colon \cN_XD\equiv \frac{TX|_{D}}{TD}\lra D.
\eEq
Let
\bEq{PXV_e}
\P_X D= \P(\cN_X D\oplus D\!\times\!\C),\qquad D_{0}= \P(0\oplus D\!\times\!\C)~~\tn{and}~~ D_{\infty}=\P(\cN_X D\oplus 0) \subset \P_XD.
\eEq
The splitting (\ref{Tidentification_e}) extends to a splitting of the exact sequence 
$$
0\lra T^{\tn{vrt}}(\P_XD) \lra T(\P_XD)
\stackrel{\nd\pi}{\lra} \pi^*TD\lra0,
$$
where $\pi\!:\P_XD\!\lra\!D$ is the bundle projection map induced by $\pi$ in (\ref{Firstpi_e}); this splitting restricts to the canonical splittings over $D_{0}\!\cong\!D_{\infty}\!\cong\! D$
and is preserved by the multiplication by~$\C^*$.
Via this splitting, the almost complex structure $J_D=J|_{TD}$ and 
the complex structure $\mfi$ in the fibers of~$\pi$ induce
an almost complex structure~$J_{X,D}$ on~$\P_XD$ which restricts to $J_D$ on $D_{0}$ and~$D_{\infty}$ and is preserved by the~$\C^*$-action. 
In fact, $J_{X,D}|_{\cN_XD}$ is the almost complex structure $J_{\dbar_{\cN_XD}}$ associated to $\dbar_{\cN_XD}$.
The projection $\pi\!:\P_XD\!\lra\!D$ is $(J_D,J_{X,D})$-holomorphic and there is a one-to-one correspondence between 
the space of $J_{X,D}$-holomorphic maps $u\colon\!\Si\!\lra\!\P_X D$ and tuples $(u_D,\ze)$ where $u_D\colon \Si\!\lra\!D$ is a  $J_D$-holomorphic map into $D$ and $\ze$ is a meromorphic section of $u_D^*\cN_XD$ with respect to the holomorphic structure defined by $u^*\dbar_{\cN_XD}$ on $u^*\cN_XD$.\\

\noindent
For each $\ell\!\in\!\N$, the $\ell$-th expanded degeneration of $X$ is the normal crossings variety 
$$
X[\ell]=\big(X\sqcup\{1\}\!\times\!\P_XD\sqcup\ldots\sqcup
\{\ell\}\!\times\!\P_XD\big)/\!\!\sim\,,
$$
where
$$
D\sim \{1\}\!\times\!D_{\infty}\,,~~~
\{r\}\!\times\!D_{0}\sim \{r\!+\!1\}\!\times\!D_\infty
\quad  \forall~r\!=\!1,\ldots,\ell\!-\!1;
$$
see Figure~\ref{relcurve_fig}.
There exists a continuous projection map $\pi_\ell\colon\!X[\ell]\!\lra\!X$ which is identity on $X$ and $\pi$ on each $\P_{X}D$.
We denote by $J_\ell$ the almost complex structure on~$X[\ell]$ so that 
$$
J_\ell|_X=J_X \qquad\hbox{and}\qquad 
J_\ell|_{\{r\}\times\P_XD}=J_{X,D} \quad \forall~r=1,\ldots,\ell.$$
For each $(t_1,\ldots,t_\ell)\!\in\!\C^*$, define
$$
\Theta_{t_1,\ldots,t_\ell}\!:X[\ell]\lra X[\ell] \qquad\hbox{by}\quad
\Theta_{t_1,\ldots,t_\ell}(x)=\begin{cases}(r,[t_rv,w]),&\hbox{if}~x\!=\!(r,[v,w])\!\in\!\{r\}\!\times\!\P_XD;\\x,&\hbox{if}~x\!\in\!X.
\end{cases}
$$
This diffeomorphism is biholomorphic with respect to~$J_\ell$ and
preserves the fibers of the projection $\P_XD\!\lra\!D$
and the sections~$D_{0}$ and~$D_{\infty}$.\\

\noindent
For $\ell> 0$, a level $\ell$ $k$-marked \textbf{relative} $J$-holomorphic map of contact type~$\mfs\!=\!(s_1,\ldots,s_{k})\!\in\!\N^{k}$ is
a continuous map $u\!:\Si\!\lra\!X[\ell]$ 
from a connected marked  nodal curve $C=\big(\Si,(z_1,\ldots,z_k)\big)$ 
such~that 
$$
u^{-1}\big(\{\ell\}\!\times\!D_0\big)
\subset\big\{z^{1},\ldots,z^{k}\big\}, \quad
\tn{ord}_{z_a}(u,\{\ell\}\!\times\!D_0)\!=\!s_a\quad \forall~z_a\!\in\!u^{-1}\big(\{\ell\}\!\times\!D_0\big),
$$
 $s_a\!=\!0$ if and only if $z_a\!\notin\!u^{-1}\big(\{\ell\}\!\times\!D_0\big)$, and the restriction of~$u$ to each irreducible component $\Si_j$ of~$\Si$ is either 
\bEn
\item a $J$-holomorphic map to $X$ such that the set $u|_{\Si_j}^{\,-1}(D)$ 
consists~of the nodes joining~$\Si_j$ to irreducible components of~$\Si$ mapped to
$\{1\}\!\times\!\P_XD$, or
\item a $J_{X,D}$-holomorphic map to $\{r\}\!\times\!\P_XD$ for some $r\!=\!1,\ldots,\ell$ such~that 
\bEnalph
\item the set $u|_{\Si_j}^{\,-1}(\{r\}\!\times\!D_{\infty})$ 
consists~of the nodes~$q_{j,i}$ joining~$\Si_j$ to irreducible components of~$\Si$ mapped to
$\{r\!-\!1\}\!\times\!\P_XD$ if $r\!>\!1$ and to~$X$ if $r\!=\!1$
and 
$$
\tn{ord}_{q_{j,i}}\big(u,D_{\infty}\big)=
\begin{cases}
\tn{ord}_{q_{i,j}}(u,D_{0}),&\hbox{if}~r\!>\!1,\\
\tn{ord}_{q_{i,j}}(u,D),&\hbox{if}~r\!=\!1,
\end{cases}
$$ 
where $q_{i,j}\!\in\!\Si_{i}$ is the point identified with~$q_{j,i}$, 
\item  if $r\!<\!\ell$, the set $u|_{\Si_j}^{\,-1}(\{r\}\!\times\!D_{0})$ 
consists~of the nodes joining~$\Si_j$ to irreducible components of~$\Si$ mapped to
$\{r\!+\!1\}\!\times\!\P_XD$;
\eEnalph
\eEn
see Figure~\ref{relcurve_fig}.
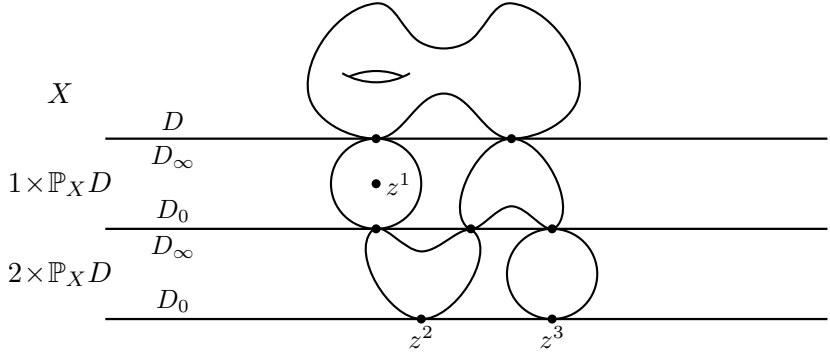
\begin{figure}
\begin{pspicture}(38,-1)(11,3.2)
\psset{unit=.3cm}
\psline[linewidth=.1](48,-2)(80,-2)
\psline[linewidth=.1](48,2)(80,2)
\psline[linewidth=.1](48,6)(80,6)

\rput(51,6.8){\small{$D$}}\rput(51,5.2){\small{$D_{\infty}$}}
\rput(51,2.8){\small{$D_{0}$}}\rput(51,1.2){\small{$D_{\infty}$}}
\rput(51,-1.2){\small{$D_{0}$}}

\pscircle*(62,-2){.2}\rput(62,-2.8){\small{$z^2$}}
\pscircle*(67.8,-2){.2}\rput(67.8,-2.8){\small{$z^3$}}

\rput(46,8){$X$}\rput(46,4){$1\!\times\!\P_XD$}\rput(46,0){$2\!\times\!\P_XD$}

\psccurve(63,10)(62,10.25)(60,12)(57,8)(60,6)(63,8)(66,6)(69,8)(66,12)(64,10.25)\pscircle*(60,6){.2}\pscircle*(66,6){.2}

\pscircle*(60,2){.2}\pscircle*(64.2,2){.2}\pscircle*(67.8,2){.2}\pscircle*(60,4){.2}
\pscircle(60,4){2}
\rput(61,4){\small{$z^1$}}
\pscircle(67.8,0){2}
\psccurve(64,2.1)(66,3)(68,2.1)(66,6)
\psccurve(59.8,1.9)(62,1)(64.4,1.8)(62,-2)

\psarc(60,6){3}{67}{113}
\psarc(60,11.5){3}{240}{300}

\end{pspicture}
\caption{A genus $2$ relative map with $k\!=\!3$ and $\mfs\!=\!(0,2,2)$
into the expanded degeneration $X[2]$.}
\label{relcurve_fig}
\end{figure}
The \textbf{genus} and the \textbf{degree} of such a map $u\!:\!\Si\!\lra\!X[\ell]$
are the arithmetic genus of~$\Si$ and the homology class
$$
A=\big[\pi_\ell\!\circ\!u\big]\in H_2(X,\Z).
$$

\noindent
Two tuples $(u,C)$ and $(u',C')$  as above are \textbf{equivalent}
if there exist a biholomorphic map $\varphi\colon\Si\!\lra\!\Si'$ 
and \hbox{$t_1,\ldots,t_\ell\!\in\!\C^*$} so~that 
$$
\varphi(z_a)=z'_a \quad \forall~a\!=\!1,\ldots,k \quad\hbox{and}\quad
u'=\Theta_{t_1,\ldots,t_\ell}\circ u \circ\varphi.
$$
A tuple as above is \textbf{stable} if it has finitely many automorphisms (self-equivalences).\\

\noindent
If $A\!\in\!H_2(X,\Z)$, $g,k\!\in\!\N$, and $\mfs\!=\!(s_1,\ldots,s_{k})\!\in\!\N^{k}$ 
is a tuple satisfying
$$
\sum_{a=1}^{k} s_a = A\cdot D,
$$ 
then the \textbf{relative moduli space} 
\bEq{relmoddfn_e}
\ov\cM_{g,\mfs}^{\tn{rel}}(X,D,A)
\eEq
is the set of equivalence classes of such connected stable $k$-marked genus~$g$ degree~$A$ $J$-holomorphic maps
into  $X[\ell]$ for any $\ell\!\in\!\N$. If $X$ is compact, the latter space has a natural compact Hausdorff topology with respect to which the forgetful map 
$$
\ov\cM_{g,\mfs}^{\tn{rel}}(X,D,A)\lra \ov\cM_{g,k}(X,A)
$$
is continuous. The logarithmic compactification $\ov\cM_{g,\mfs}^{\tn{log}}(X,D,A)$ defined in \cite{FT1} replaces the level structure with a partial ordering on the components of the domain and collapses unstable components that are maps from $\P^1$ with less than three special points to fibers of $\P_XD$.

\vspace{.2in}

\noindent
{\it The University of Iowa, MacLean Hall, Iowa City, IA 52241\\
mohammad-tehrani@uiowa.edu}\\

\end{document}